\newcommand{\inte}{\operatorname*{int}}
\newcommand{\aff}{\operatorname*{aff}}
\newcommand{\Rd}{\mathbb{R}^2}
\newcommand{\Rt}{\mathbb{R}^3}
\newcommand{\Rn}{\mathbb{R}^{n}}
\newtheorem{lemma}{Lemma}
\newtheorem{proposition}{Proposition}
\newtheorem{theorem}{Theorem}
\newtheorem{corollary}{Corollary}
\newtheorem{remark}{Remark}
\newtheorem{definition}{Definition}
\newtheorem{conjecture}{Conjecture}
\newtheorem{question}{Question}
\title {A characterization of the sphere and a body of revolution by means of Larman points}
\author{M. Angeles Alfonseca, M. Cordier, J. Jer\'onimo-Castro$^{3}$ \\ E. Morales-Amaya$^{4}\footnote{{This research was supported by the National Council of Sciences and Technology of Mexico (CONACyT) Grant I0110/62/10 and SNI 21120. This work was partially done during a sabbatical year of the author to the Depart. of Mathematics, University College London (UCL). The author wants to thanks the hospitality and the support given by the UCL to our scientific work.}}$\\ 
\small{$^{3}$Facultad de Ingenier\'ia}\\
\small{Universidad Aut\'onoma de Quer\'etaro, M\'exico}\\
\small{$^{4}$Facultad de Matem\'aticas-Acapulco,}\\
\small{Universidad Aut\'onoma de Guerrero, M\'exico}\\
 }
\begin{document}

\maketitle
	
\abstract{Let $K\subset \Rn$, $n\geq 3$, be a convex body.  A point $p\in \inte K$ is said to be a \textit{Larman point} of $K$ if  for every hyperplane $\Pi$ passing through $p$ 
the section $\Pi\cap K$ has a $(n-2)$-plane of symmetry. If $p$ is a Larman point of $K$ and, in addition, for every section $\Pi\cap K$, $p$ is in the corresponding $(n-2)$-plane of symmetry, then we call $p$ a {\it revolution} point of $K$. We conjecture that if $K$ contains a Larman point which is not a revolution point, then $K$ is either an ellipsoid or a body of revolution. This generalizes a conjecture of K. Bezdek for convex bodies in $\mathbb{R}^3$ to $n \geq 4$. 
We prove several results related to the conjecture for strictly convex origin symmetric bodies. Namely, if $K \subset \mathbb{R}^n$ is a strictly convex origin symmetric body that contains a revolution point $p$ which is not the origin, then $K$ is a body of revolution. This generalizes the False Axis of Revolution Theorem proven in \cite{jemomo}. We also show that if $p$ is a Larman point of $K \subset \mathbb{R}^3$ and there exists a line $L$ such that $p\notin L$ and, for every plane $\Pi$ passing through $p$, the line of symmetry of the section $\Pi \cap K$ intersects  $L$, then  $K$ is a body of revolution (in some cases, we conclude that $K$ is a sphere). We obtain a similar result for projections of $K$. Additionally, for $K \subset \mathbb{R}^n$, $n \geq 4$, we show that if every hyperplane section or projection of $K$ is a body of revolution and $K$ has a unique diameter $D$, then $K$ is a body of revolution with axis $D$.}

\section{Introduction}

In this work, we study Geometric Tomography problems in which we are given information about the symmetries of sections (or projections) of a convex body $K \subset \Rn$, $n\geq 3$, and want to obtain 
information about the symmetries of $K$.

\begin{question}\label{prob3} 
What can we say about a convex body $K\subset \Rn$, $n \geq 3$, with the property that there exists a point $p\in \Rn$ such that all hyperplane sections of $K$ passing through $p$ possess a certain type of symmetry? 

\end{question}

A particularly simple case of Question \ref{prob3} occurs when  $p$ is an interior point of $K$ and all the hyperplane sections passing through $p$ are discs. In this case, $K$ must be a sphere.   Indeed, by Hammer's result \cite[Thm. 3.1]{ham}, there is a diametral chord of $K$ passing through $p$. Since for every hyperplane $\Pi$ through $p$, the section $\Pi\cap K$ is a $(n-1)$-dimensional ball, 
the diametral chord is an axis of revolution of $K$. Therefore, $K$ is a solid sphere.

Another case of Question \ref{prob3}  occurs when all the sections of $K$ through the point $p$ are assumed to be centrally symmetric, but $p$ is not the center of symmetry of $K$. This problem is known as the False Centre Theorem  of Aitchison-Petty-Rogers and Larman. In \cite{ro1}, Rogers proved, in a very elegant way, that  if $K\subset \Rn$, $n\geq 3$, has a false centre, then $K$ must be centrally symmetric. In the same paper, Rogers conjectured that such a $K$ should be an ellipsoid.   The conjecture was confirmed in \cite{apr} in the case when the false centre is an interior point of $K$. Finally,  the False Centre Theorem was proven in all its generality in \cite{la}.

Instead of considering sections with central symmetry, as in the False Centre Theorem, K. Bezdek formulated the following conjecture in which the sections have axial symmetry.

\begin{conjecture}\cite[pg. 221]{blnp}\label{bez}
    If all plane sections of a convex body $K \subset \mathbb{R}^3$ have an axis of symmetry, then $K$ is an ellipsoid or a body of revolution.
\end{conjecture}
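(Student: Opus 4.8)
The plan is to reduce Bezdek's conjecture to the revolution-type results of this paper by manufacturing, from the hypothesis alone, a point $p$ and a line $L$ to which the axial-symmetry machinery applies; the genuinely new work lies in producing this axis. I would first dispose of the extreme case in which every plane section of $K$ is a circle: then $K$ is a Euclidean ball, which is both an ellipsoid and a body of revolution, so there is nothing to prove. Hence I may assume some section $S_0 = \Pi_0 \cap K$ is not a circle. A planar convex body that is not a circle has only finitely many axes of symmetry (two distinct axes are non-parallel for a bounded body and their composition is a rotation about their common point; an infinite rotation group would force the body to be a disk), so on an open, dense set of cutting planes the axis of symmetry is \emph{unique} and, by compactness, varies continuously with the plane.

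Next I would set up a line field. Fixing an interior point $p$, the hypothesis makes $p$ a Larman point: every section $\Gamma \cap K$ through $p$ carries an axis of symmetry $\ell(\Gamma)$. Parametrising the planes through $p$ by their unit normals gives a continuous, antipodally even assignment of a line $\ell$ in each plane, i.e. a line field on $\mathbb{S}^2$ (equivalently on $\mathbb{RP}^2$). Since $\chi(\mathbb{S}^2)=2\neq 0$, the Poincar\'e--Hopf theorem forbids a continuous singularity-free line field, so the assignment must degenerate somewhere; degeneration means that at that direction the section has more than one axis, hence, by the previous paragraph, is a circle. I would then argue that the locus of circular sections through $p$ organises the body: a circular section pins down its centre together with the perpendicular to it at that centre, and tracking these perpendiculars produces a single line $L$ that is met by all the axes $\ell(\Gamma)$.

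With such an $L$ in hand, together with the fact that $p$ is a Larman point whose section-axes all meet $L$, the situation is exactly that of the main theorem quoted in the abstract, provided $K$ is strictly convex and centrally symmetric. Central symmetry I would extract from the structure of the circular sections via Rogers' half of the False Centre Theorem (all sections through a suitable point being centrally symmetric forces central symmetry), while the Aitchison--Petty--Rogers and Larman results isolate the ellipsoidal alternative when the distinguished point is, or can be taken to be, interior. The theorem of the abstract then yields that $K$ is a body of revolution, and in the degenerate subcases (for example, all sections through $p$ circular) that $K$ is a sphere, matching the two conclusions of the conjecture.

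The main obstacle, and the reason the conjecture is hard, is the multivaluedness and possible degeneration of the axis assignment. Convex sections with dihedral symmetry $D_k$ ($k\ge 2$) have several axes yet are not circles, so the line field is really a multisection of a covering whose monodromy I must control before continuity and the Poincar\'e--Hopf count are legitimate; ruling out a positive-dimensional tangle of such exceptional sections is the crux. A secondary difficulty is dropping the two standing assumptions of the abstract's theorem: the conjecture assumes neither strict convexity (so sections may contain segments, making axes non-unique along flat boundary pieces) nor central symmetry (so both the ellipsoid alternative and the non-symmetric bodies of revolution must be accommodated, and the clean reduction to the False Centre Theorem is not automatic). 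Bridging from the centrally symmetric, strictly convex case treated here to the full conjecture is where I expect most of the effort to concentrate.
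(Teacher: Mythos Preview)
The statement you are attempting to prove is Conjecture~\ref{bez} in the paper, and the paper does \emph{not} prove it: Bezdek's conjecture is presented as open, and the paper only establishes partial results (Theorems~\ref{sisi}, \ref{tania}, \ref{seven}) under the additional standing hypotheses of central symmetry, strict convexity, and the \emph{a priori} existence of the auxiliary line $L$ (or of a revolution point). So there is no ``paper's own proof'' to compare against, and your proposal is really an attack on an open problem.

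That said, several steps in the outline are genuine gaps, not just difficulties to be filled in later. First, your degeneration argument conflates ``more than one axis'' with ``circle''. A planar convex body with dihedral group $D_k$, $k\geq 2$, has exactly $k$ axes and is not a disk; hence a singularity of the line field produced by Poincar\'e--Hopf only tells you the section there has at least two axes, not that it is circular. You note this obstruction at the end, but earlier you use the stronger (false) implication to manufacture circular sections and then the line $L$. Second, even granting some circular sections through $p$, the assertion that ``tracking these perpendiculars produces a single line $L$ that is met by all the axes $\ell(\Gamma)$'' is entirely unsupported; nothing in the hypothesis forces the centres of the circular sections, let alone the axes of the non-circular ones, to be collinear or concurrent in this way. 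This is precisely the extra datum that Theorems~\ref{sisi} and~\ref{seven} \emph{assume}, and the paper gives no mechanism for deriving it. Third, your appeal to Rogers for central symmetry fails outright: Rogers' result needs all sections through a point to be \emph{centrally} symmetric, whereas an axis of symmetry does not imply a centre (an isoceles triangle already shows this). Axial symmetry of every section through $p$ does not make $p$ a false centre, so neither Rogers nor Aitchison--Petty--Rogers--Larman applies.

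In short, the reduction you propose would require exactly the three ingredients the paper has to \emph{assume} (central symmetry, strict convexity, the line $L$), and your sketch does not produce any of them from the bare hypothesis of Conjecture~\ref{bez}. You correctly identify these as the crux in your final paragraph; the honest summary is that the proposal is a plausible strategy outline, not a proof, and the conjecture remains open as the paper states.
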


Conjecture \ref{bez} involves all sections of $K$, not just the sections passing through a fixed point. In \cite{mo}, Montejano gave an example showing that considering only sections through a fixed point is not enough. Indeed, in $\mathbb{R}^3$, the convex hull of two perpendicular discs centered at the origin has the property that every section through the origin has an axis of symmetry. 

K. Bezdek's conjecture can be generalized to higher dimensions in several ways. Let $K \subset \Rn$, $n \geq 4$, be a convex body, and consider all hyperplane sections of $K$. We may assume one of the following hypotheses:

\begin{enumerate}[H(i):]
\item All hyperplane sections have an axis of symmetry,
   
    \item All hyperplane sections have an $(n-2)$-plane of symmetry,
     \item All hyperplane sections are $(n-1)$-dimensional bodies of revolution.
\end{enumerate}

When $n=3$, all three conditions H(i), H(ii) and H(iii) reduce to K. Bezdek's condition that all plane sections of $K$ have an axis of symmetry. 

For $n\geq 4$, in cases H(i) and H(ii), the conclusion of the conjecture should be that $K$ is either an ellipsoid or a body of revolution. However, in case H(iii), ellipsoids  which are not bodies of revolution are excluded, (note that if an ellipsoid is not a body of revolution, it has $n$ axes of symmetry but no axis of revolution). Thus, the conclusion in case H(iii) should be that $K$ is a body of revolution.

In this paper, we focus on the cases H(ii) and H(iii) with some additional hypotheses, but we do not require the condition to hold for all sections of $K$,  just for sections passing through a fixed point. 
We need to introduce some definitions.

\begin{definition}
A point $p\in \inte K$ is said to be a \textit{Larman point} of $K$ if   for every hyperplane $\Pi$ passing through $p$ 
the section $\Pi\cap K$ has a $(n-2)$-plane of symmetry. 
\end{definition}


\begin{definition}
Let $p \in \inte K$ be a Larman point of $K$. We call $p$ a {\it revolution point} of $K$ if for every hyperplane $\Pi$ passing through $p$ 
the section $\Pi\cap K$ has a $(n-2)$-plane of symmetry which contains $p$.
\end{definition}

 As examples of Larman and revolution points, we note that if $c$ is the centre of an ellipsoid $E\subset \Rn$, which is not a body of revolution,  then $c$ is a revolution point of $E$.
 Furthermore, every point $p \neq c$ in the interior of the ellipsoid is a Larman point, but not a revolution point.  On the other hand, every point on the axis of a body of revolution  is a revolution point, while every point $p$ not on the axis is a Larman point.  We prove these facts in Corollary  \ref{ejemplos} (page 13). 

 With this terminology, we state the  following conjecture.
\begin{conjecture}\label{amaya}
Let $K\subset \Rn$, $n\geq 3$, be a convex body. Suppose that $p \in \inte K$ is a Larman  point of $K$ which is not  a revolution point of $K$. Then either $K$ is an ellipsoid or $K$ is a body of revolution.  
\end{conjecture}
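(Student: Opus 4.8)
The plan is to derive the conjecture from two engines already available: the False Centre Theorem of Aitchison--Petty--Rogers and Larman, and the paper's main rigidity statement that a Larman point together with a suitable ``organizing line'' $L$ forces $K$ to be a body of revolution. The first observation is structural: since the section $\Pi\cap K$ is $(n-1)$-dimensional, an $(n-2)$-plane of symmetry of it is a reflection mirror \emph{inside} the hyperplane $\Pi$; write $H_\Pi$ for such a mirror. For each $\Pi$ through $p$ there is a dichotomy, $p\in H_\Pi$ or $p\notin H_\Pi$. If $p\in H_\Pi$ for every $\Pi$ then $p$ is a revolution point, which is excluded; hence there is an open family $\mathcal{U}$ of hyperplanes with $p\notin H_\Pi$. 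It is precisely on $\mathcal{U}$ that the situation behaves like a ``false'' one, and the whole difficulty is to convert the local, section-by-section mirror symmetries into a global symmetry of $K$.

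I would first settle the base case $n=3$, where $H_\Pi$ is an axis of symmetry of the planar section $\Pi\cap K$. The key intermediate goal is to manufacture a line $L$ with $p\notin L$ such that every axis $H_\Pi$ meets $L$; once this is in hand, the paper's theorem on Larman points with an auxiliary line delivers that $K$ is a body of revolution (or a sphere). To build $L$, I would study the \emph{axis map} $\Pi\mapsto H_\Pi$ as the cutting plane rotates about $p$: using compactness one selects, along each rotation, a continuously varying branch of axes, and tracks the points in which consecutive axes meet. The aim is to show these intersection points are confined to a single line, the candidate $L$; the rotational family of sections should force the axes to pivot about a common line rather than wander freely.

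For general $n$ one would like to reduce to lower dimension, but the Larman hypothesis does not obviously restrict to the sub-sections of a single $\Pi\cap K$ (it controls the full $(n-1)$-dimensional sections, not their further slices), so a clean induction is not immediate; I would instead try to run the same axis-map analysis directly on the $(n-1)$-parameter family of hyperplanes through $p$, with the sphere case (all sections spheres $\Rightarrow$ sphere) and the False Centre Theorem as the closing tools. In all dimensions the conclusion should bifurcate along the dichotomy recorded above: when the accumulated symmetries organize into central symmetries of the sections, $p$ is a false centre and the False Centre Theorem yields an \emph{ellipsoid}; when they organize about a common axis line, one obtains a \emph{body of revolution}.

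The hard part, and the reason the statement is posed as a conjecture rather than a theorem, is that the axis of symmetry of a section is neither unique nor continuous in general: a disc has infinitely many axes, and $H_\Pi$ may jump as $\Pi$ varies. One must therefore rule out or quarantine the exceptional, overly symmetric sections, make a globally consistent continuous selection of the mirrors, and only then hope to assemble the local reflections into a one-parameter family of motions of $K$ or into genuine central symmetries of its sections. The passage between the regime $p\in H_\Pi$ and the regime $p\notin H_\Pi$ is exactly where the boundary case of the False Centre Theorem (Larman's contribution) is needed, and carrying this out for all $n\geq3$ with no regularity or symmetry assumptions on $K$ is the step I expect to be genuinely difficult; this is why the paper itself proves only the special cases stated in the abstract (strict convexity and central symmetry in $\Rt$ with the line $L$ given, and the revolution-point case).
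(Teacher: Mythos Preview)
The statement you are attempting is Conjecture~\ref{amaya}, and the paper does \emph{not} prove it; it is explicitly posed as an open problem, with Theorems~\ref{sisi}, \ref{tania} and~\ref{seven} established only as partial results under strong extra hypotheses (strict convexity, central symmetry, and the line $L$ supplied in advance). So there is no ``paper's own proof'' to compare against, and your final paragraph correctly identifies this.

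That said, your outline has genuine gaps beyond the ones you flag. First, the central step---manufacturing a line $L$ with $p\notin L$ such that every axis $H_\Pi$ meets $L$---is asserted with no mechanism. Tracking intersection points of ``consecutive axes'' along a one-parameter rotation gives at best a curve, not a line, and there is no reason offered why different rotation families should produce the same curve, let alone a straight line. This is precisely the unknown content of the conjecture, not a technicality. Second, even if you had such an $L$, Theorem~\ref{sisi} would not apply: it requires $K$ to be centrally symmetric and strictly convex, $o\notin L$, $p$ in the plane spanned by $o$ and $L$, and $op$ not perpendicular to $L$. None of this is available under the bare hypotheses of Conjecture~\ref{amaya}. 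Third, your proposed bifurcation via the False Centre Theorem is not well-grounded: an $(n-2)$-plane of symmetry of $\Pi\cap K$ is a mirror, not a centre, and having mirrors through $p$ for many $\Pi$ does not yield centres of symmetry for those sections. The False Centre Theorem needs every hyperplane section through $p$ to be centrally symmetric, which your hypotheses do not supply.

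In short: the paper leaves this as a conjecture for good reason, and your sketch does not bridge the gap; the construction of $L$ and the reduction to the paper's theorems both fail without the additional hypotheses the paper imposes.
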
  

Observe that if a Larman point $p$ is also the centre of symmetry of $K$, then $p$ is a revolution point of $K$. Hence, Montejano's example of the convex hull of two discs is now excluded by the assumption that the Larman point $p$ is not a revolution point.


Our main results are the following theorems:

\begin{theorem}\label{tania}
Let $K\subset \Rn$ be a centrally symmetric strictly convex body with centre at $o$. Suppose that $K$ has a revolution point $p$, $p\not= o$. Then $K$ is a body of revolution whose axis is the line $L(o,p)$ passing through the points $o$ and $p$.
\end{theorem}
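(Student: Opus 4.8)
The goal is to show that every cross-section of $K$ by a plane orthogonal to $L:=L(o,p)$ is a disk centred on $L$, which is exactly the assertion that $K$ is a body of revolution with axis $L$. I choose coordinates so that $o$ is the origin, $L$ is the $z$-axis, and $p=(0,0,a)$ with $a\neq 0$; write $e_3=(0,0,1)$. The plan begins with the pencil of planes containing $L$. Such a plane $\Pi$ passes through both $o$ and $p$, so $\Pi\cap K$ is centrally symmetric about $o$ and, since $p$ is a revolution point, has a line of symmetry through $p$. Now any axis of symmetry of a planar convex body passes through its centroid (a reflection leaving the body invariant is an isometry, so it fixes the centroid, and the fixed-point set of a reflection is precisely its axis); for a centrally symmetric section the centroid is $o$. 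Hence the axis passes through both $o$ and $p$, so it equals $L$. Thus every section by a plane through $L$ is symmetric about $L$, which is equivalent to saying $K$ is invariant under the half-turn $r_L\colon (x,y,z)\mapsto(-x,-y,z)$; composing with the central symmetry $(x,y,z)\mapsto(-x,-y,-z)$ then yields invariance under the reflection $\sigma\colon(x,y,z)\mapsto(x,y,-z)$ in the plane through $o$ orthogonal to $L$.

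These vertical sections alone give only a two-fold symmetry: a coordinate-aligned, non-spherical ellipsoid satisfies everything established so far, so the heart of the argument must be the planes through $p$ that are genuinely tilted with respect to $L$. For such a plane $\Pi$ the section $S=\Pi\cap K$ need not be centrally symmetric, but it still has a line of symmetry $m$ through $p$. Since $K$ is strictly convex, a section that is not a disk is not a disk and hence has a finite symmetry group, all of whose axes pass through the centroid $g(S)$; consequently $m$ is forced to be the line $g(S)\,p$ joining the centroid to $p$, and $S$ must be invariant under reflection in that line. My plan is to follow $m$ as the unit normal $\nu$ of $\Pi$ ranges over $S^2$: writing $\mu(\nu)$ for the direction of $m$ (a continuous branch, defined away from the directions where $S$ is a disk), this is a line field on the hemisphere of normals, with boundary value $\mu(\nu)=e_3$ whenever $\nu\perp e_3$ (the vertical planes treated above) and a horizontal value at $\nu=e_3$ (the horizontal section through $p$). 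I would exploit this boundary behaviour together with the symmetries $r_L$ and $\sigma$ to constrain $g(S)$ and the reflection it determines, the aim being that the constraint is incompatible with $g(S)\neq o$ unless $S$ is a circle.

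The main obstacle is precisely this last step, namely passing from ``every section through $p$ is symmetric about the line $g(S)\,p$'' to full rotational symmetry of $K$ about $L$; the tilted planes are exactly what separates a true body of revolution from a coordinate ellipsoid, for which a tilted section through $p$ is an ellipse whose axes miss $p$, so that $p$ fails to be a revolution point. I expect the cleanest route to be perturbative: linearising the requirement ``$\Pi\cap K$ has an axis of symmetry through $p$'' as $\Pi$ is tilted away from a plane containing $L$, one reads off a first-order obstruction whose vanishing at every azimuth forces the corresponding horizontal section of $\partial K$ to be a circle; alternatively, a winding/degree count for the line field $\mu$ shows it cannot be consistently defined on the whole hemisphere unless its singular set is large, i.e. unless the horizontal sections are circles. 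Propagating circularity across all heights by means of $r_L$ and $\sigma$ then gives that every cross-section orthogonal to $L$ is a disk centred on $L$, so $K$ is a body of revolution with axis $L(o,p)$, as claimed.
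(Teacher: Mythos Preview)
Your first step---showing that $L=L(o,p)$ is an axis of symmetry of $K$---is correct and is exactly what the paper does: a section through both $o$ and $p$ is centrally symmetric about $o$, so the guaranteed axis through $p$ must also hit $o$, hence equals $L$. Good.

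The rest of your proposal, however, is not a proof but a plan. You yourself flag the ``main obstacle'' and then offer two alternative strategies (a perturbative linearisation, or a winding/degree argument for the line field $\mu(\nu)$) without carrying out either. Phrases like ``I expect the cleanest route to be\ldots'' and ``one reads off a first-order obstruction whose vanishing\ldots\ forces\ldots'' are programmatic, not demonstrative. Neither the linearisation nor the topological argument is made precise enough to check; in particular you have not said what the obstruction actually is, why it vanishes, or why the line field cannot extend without circular sections. As written, the argument stops exactly at the hard part.

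The paper bypasses all of this with an elementary geometric lemma (Lemma~\ref{marina}): once $L$ is known to be an axis of symmetry and $p\neq o$, the locus $t(p)$ of endpoints of chords with midpoint $p$ is \emph{exactly} the boundary curve $\partial K\cap H$, where $H$ is the plane through $p$ perpendicular to $L$. The proof uses only strict convexity and the two symmetries you already established. With this in hand the tilted planes are handled directly: fix a horizontal line $L(\theta)\subset H$ through $p$, and let $\Gamma$ be any plane containing $L(\theta)$. The chord $L(\theta)\cap K$ lies in $H$, hence belongs to $t(p)$. If the axis $M\ni p$ of $\Gamma\cap K$ were not perpendicular to $L(\theta)$, reflecting this chord in $M$ would produce another chord with midpoint $p$ that leaves $H$, contradicting the lemma. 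So $M\perp L(\theta)$ for every such $\Gamma$; equivalently, the reflection of $\Gamma\cap K$ in the plane $\Pi(\theta+\pi/2)\supset L$ perpendicular to $L(\theta)$ fixes $\Gamma\cap K$. Since the planes $\Gamma$ through $L(\theta)$ fill space, $\Pi(\theta+\pi/2)$ is a plane of symmetry of $K$, and varying $\theta$ gives every plane through $L$. This is the missing idea: the characterisation of $t(p)$ pins down the direction of each tilted axis of symmetry without any limiting or topological machinery.
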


  In the next two theorems, we need an additional hypothesis regarding the existence of an auxiliary line. This  condition 
is, in a way, natural: If $K \subset \Rt$ is a body of revolution whose axis is the line $L$, and the point $p$ is not in $L$, then, on the one hand, $p$ is a Larman point of $K$ and, on the other hand, for every plane $\Pi$ passing through $p$, the section $\Pi \cap K$  has a line of symmetry passing through the point $\Pi \cap L$ (if $\Pi$ is not parallel to $L$). Note that in case (i) of Theorem \ref{sisi}, the fact that $p$ is the centre of symmetry of $K$ implies that $p$ is a revolution point, and the existence of the line $L$ excludes Montejano's counterexample.  

\begin{theorem}\label{sisi}
Let $K\subset \Rt$ be a centrally symmetric strictly convex body with centre at $o$. Let $L$ be a line such that $o \notin L$, let $\Omega$ be the plane containing $o$ and $L$, and let $p\in \Omega\setminus L$ be a  Larman point of $K$. 
Assume that for all planes $\Pi$ passing through $p$, the section $\Pi \cap K$ has a line of symmetry which intersects $L$  in (the case where the plane  is parallel to $L$, then the line of symmetry of the section of $K$ is assumed to be parallel to $L$). 
Then 
\begin{enumerate}[(i)]
  \item if $p=o$, then $K$ is a body of revolution,  
  
  \item if $p \neq o$ and the line $op$ is not perpendicular to $L$, then $K$ is a sphere.  
\end{enumerate}
\end{theorem}

We say that a line $L$ is an axis of symmetry of $K$ if, on the one hand, all sections of  $K$ by hyperplanes orthogonal to $L$ are centrally symmetric with center at a point in $L$, and on the other hand, all sections of $K$ by hyperplanes containing  $L$ have $L$ as a line of symmetry (\textit{i.e.}, given a hyperplane $H$, for every point $x\in K \cap H$, its reflection with respect to $L$ is also in $K\cap H$). With this terminology, we can state our next result.

\begin{theorem}
    \label{seven}
Let $K\subset \mathbb{R}^3$ be an origin symmetric, strictly convex body. Let $L$ be an axis of symmetry of $K$  containing the origin $o$, and let $p\in (\inte K )\setminus L$ be a Larman point of $K$. Suppose that for, every plane $\Pi$ passing through $p$, the section $\Pi \cap K$ has a line of symmetry which contains the point $\Pi\cap L$. Then $K$ is a body of revolution with axis $L$.
\end{theorem}


 As a corollary of Theorem \ref{tania}  we obtain:

\begin{corollary}\label{billy}
Let $K\subset \Rt$ be a centrally symmetric strictly convex body with centre at $o$. Suppose that $K$ has two distinct revolution points $p,q$ such that $p\not= o\not= q$ and $o$ does not belong to $L(p,q)$. Then $K$ is a sphere.  
\end{corollary}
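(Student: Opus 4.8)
The plan is to deduce the corollary from Theorem \ref{tania} together with a classical fact about bodies possessing more than one axis of rotational symmetry. Since $p$ is a revolution point of $K$ with $p\neq o$, Theorem \ref{tania} applies directly and tells us that $K$ is a body of revolution whose axis is the line $L(o,p)$. Applying Theorem \ref{tania} a second time, now to the revolution point $q\neq o$, I would conclude in the same way that $K$ is also a body of revolution, this time with axis $L(o,q)$. Thus $K$ carries two axes of revolution, both passing through $o$.

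Next I would check that these two axes are genuinely distinct. The lines $L(o,p)$ and $L(o,q)$ both pass through $o$, so they coincide if and only if the three points $o,p,q$ are collinear, i.e. if and only if $o\in L(p,q)$. The hypothesis explicitly excludes this, so $L(o,p)$ and $L(o,q)$ are two different lines through the common point $o$. Note also that $o$, being the centre of the centrally symmetric body $K$, lies in $\inte K$, so the radial function of $K$ with respect to $o$ is well defined and finite in every direction; this will be needed to turn rotational invariance into the conclusion that $K$ is a ball.

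Finally, I would argue that a convex body invariant under rotations about two distinct axes through a common point must be a sphere. Concretely, $K$ is invariant under the full one-parameter group of rotations about $L(o,p)$ and under the full one-parameter group of rotations about $L(o,q)$, and since both axes pass through $o$ these are subgroups of $SO(3)$ (the rotations fixing $o$). The subgroup of $SO(3)$ generated by rotations about two distinct axes is all of $SO(3)$: the two infinitesimal generators are linearly independent elements of $\mathfrak{so}(3)$, and together with their bracket they span the three-dimensional algebra $\mathfrak{so}(3)$. Hence $K$ is invariant under every rotation fixing $o$. Consequently its radial function with respect to $o$ is constant, every $SO(3)$-orbit of a boundary point is a full Euclidean sphere centred at $o$, and $K$ is forced to be a ball centred at $o$, that is, a sphere.

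The one step that requires genuine care, and which I regard as the only real obstacle, is this last passage from two rotational symmetries to full rotational symmetry. I would justify it by the Lie-algebra computation sketched above, or, equivalently and more elementarily, by observing that the orbit of any boundary point under the combined rotations already fills out the entire sphere of that radius about $o$, so that $\partial K$ contains such a sphere; since $\partial K$ bounds a convex body with $o$ in its interior, it can contain only one sphere about $o$, and $K$ must be the corresponding ball. Everything else is an immediate double application of Theorem \ref{tania} and the elementary collinearity observation.
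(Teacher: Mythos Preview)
Your argument is correct and is precisely the deduction the paper has in mind: the corollary is stated immediately after Theorem~\ref{tania} without a separate proof, so the intended reasoning is exactly the double application of Theorem~\ref{tania} to obtain two distinct axes of revolution $L(o,p)$ and $L(o,q)$ through $o$, followed by the standard fact that a body with two distinct axes of revolution through a common interior point is a ball. Your justification of the last step (via the group generated by the two rotation subgroups, or equivalently via the orbit argument) is a valid way to fill in what the paper leaves implicit.
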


 This corollary is an improvement on a result by Jer\'onimo-Castro, Montejano, and Morales-Amaya. 
 Inspired by the False Centre Theorem,  they give the following characterization of the sphere as the only body which contains a ``false axis of revolution''. 

{\bf Theorem}\cite{jemomo}:
   {\it   If a strictly convex body $K \subset \mathbb{R}^3$ contains a line $L$ such that all points in $L$ are revolution points, but $L$ is not an axis of revolution of $K$, then $K$ must be a sphere.  }

 {\bf Observations:} 
\begin{enumerate}
\item  In Corollary \ref{billy}, only two revolution points are needed, rather than the whole line needed in \cite{jemomo}. Note that, although the False Axis of Revolution Theorem does not assume that $K$ is centrally symmetric, the central symmetry actually follows from the hypotheses (see \cite[Lemma 2.4]{jemomo}). Therefore, the central symmetry in our Corollary \ref{billy} is not an additional assumption.

\item The proof of the False Axis of Revolution uses strict convexity by considering the set of extreme points  $t(x)$ of chords of $K$ whose center is $x\in \inte K$. For a strictly convex body $K$, the set $t(x)$ is contained in a plane. For a non strictly convex body this is no longer true.  Our Theorems \ref{tania}, \ref{sisi} and \ref{seven} also make use of this fact (see Lemma \ref{marina} in the present paper), which is why we need the strict convexity hypothesis. 

\end{enumerate}
 
Our next result is the dual version of Theorem \ref{sisi}, but now the conclusion is that $K$ is a body of revolution, and we no longer obtain the case where $K$ is a sphere.

\begin{theorem}\label{nerd}
Let $K\subset \Rt$ be a strictly convex body and let $L$ be a line. Suppose that every orthogonal projection of $K$ has a line of symmetry which intersects $L$ (if the plane of projection is parallel to $L$, then we assume that the line of symmetry of the projection of $K$ is parallel to $L$). Then $K$ is a body of revolution.  
\end{theorem}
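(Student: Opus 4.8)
The plan is to pass to the polar body and thereby convert the projection hypothesis into a section hypothesis of the type already handled by Theorem~\ref{sisi} and Theorem~\ref{tania}.

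After a rigid motion I would assume $L$ is a coordinate axis and pick a point $o\in L$ with $o\in\inte K$; let $M=K^{\ast}$ be the polar of $K$ with respect to $o$. Since $K$ is strictly convex, $M$ is again a strictly convex body with $o$ in its interior. The section--projection duality $(K\mid u^{\perp})^{\ast}=M\cap u^{\perp}$ (polars taken inside $u^{\perp}$ about $o$) identifies the orthogonal projection of $K$ in a direction $u$ with the central section $M\cap u^{\perp}$. For every plane $u^{\perp}$ not containing $L$ the intersection $L\cap u^{\perp}$ is exactly $o$, so by hypothesis the symmetry line of $\pi_{u}(K)$ passes through $o$; as polarity about $o$ commutes with the linear reflection fixing that line, the section $M\cap u^{\perp}$ has the \emph{same} line as an axis of symmetry through $o$. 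Hence every central section of $M$ by a plane not containing $L$ has a line of symmetry through $o$, so $o$ is, apart from the exceptional planes, a revolution point of $M$.

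It then remains to treat the planes through $L$, which are dual to the projections onto planes parallel to $L$. On the primal side these projections are symmetric about lines parallel to $L$, so the corresponding sections of $M$ are symmetric about lines in the fixed direction of $L$. Using strict convexity---which makes the axis of symmetry of each non-circular section unique and a continuous function of the cutting plane---I would pass to the limit from the neighbouring, non-exceptional, planes and conclude that these axes too pass through $o$. This upgrades $o$ to a genuine revolution point of $M$ and, crucially, records the extra rigidity coming from $L$: the axes are pinned not merely through $o$ but compatibly with the fixed direction of $L$, which is exactly what prevents $M$ from being a non-spherical ellipsoid. With this in place, the analysis used to prove Theorem~\ref{sisi}(i) and Theorem~\ref{tania} on the section side shows that all these symmetry lines are the traces, on the cutting planes, of a single line through $o$, so $M$ is a body of revolution about that line. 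Since the polar of a body of revolution taken about a point of its axis is again a body of revolution with the same axis, $K=M^{\ast}$ is a body of revolution.

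I expect the crux to be this last axis--consistency step: showing that the continuously varying symmetry lines of the central sections of $M$ are the sections of one fixed line, rather than, say, the axes of a family of nested ellipses. This is precisely where strict convexity (uniqueness and continuity of the axis) and the non-degenerate position of $L$ are both indispensable, and it is also why the conclusion is only a body of revolution and not a sphere: the dual configuration is the central one, $p=o$, for which Theorem~\ref{sisi} already stops at a body of revolution. A secondary technical point is the limiting argument for the exceptional family of planes containing $L$, which must be carried out before the rigidity of Theorem~\ref{sisi}(i) can be invoked.
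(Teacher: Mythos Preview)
Your polar-duality plan has several genuine gaps that prevent it from going through.

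First, you assume without justification that $L\cap\inte K\neq\emptyset$, so that a centre $o\in L$ for polarity can be chosen. The theorem allows $L$ to lie entirely outside $K$; in that case the section--projection duality $(K\mid u^{\perp})^{\ast}=K^{\ast}\cap u^{\perp}$ (which is valid only for \emph{linear} subspaces $u^{\perp}$ through the pole) no longer produces sections whose symmetry lines hit $L$.

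Second, and more seriously, both Theorem~\ref{sisi} and Theorem~\ref{tania} require the body to be \emph{centrally symmetric}. The polar $M=K^{\ast}$ about $o$ is centrally symmetric with centre $o$ precisely when $K$ is, and Theorem~\ref{nerd} makes no such hypothesis on $K$. So neither theorem is available to you on the dual side.

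Third, even if $M$ happened to be $o$-symmetric, your dual configuration is exactly the degenerate one those theorems were designed to \emph{avoid}. Placing $o$ on $L$ forces, for every plane $u^{\perp}$ not containing $L$, the intersection $L\cap u^{\perp}=\{o\}$; hence the symmetry line of $M\cap u^{\perp}$ passes through $o$, and you obtain nothing more than ``$o$ is a revolution point of $M$ which is also its centre''. Montejano's example (the convex hull of two orthogonal discs) shows this is \emph{not} enough to conclude $M$ is a body of revolution. Theorem~\ref{sisi}(i) escapes this by demanding an auxiliary line $L'$ with $o\notin L'$ that all the symmetry lines meet; in your dual picture $L$ passes through $o$, so it cannot serve as $L'$, and the bare information ``all symmetry lines pass through $o$'' gives no other candidate. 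The ``extra rigidity coming from $L$'' you invoke is therefore illusory: after dualising, $L$ contributes only the point $o$.

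The paper's proof avoids duality entirely and works directly with projections. It first applies Lemma~\ref{gretel} to the projections onto planes parallel to $L$ (whose symmetry lines are parallel to $L$) to conclude that $K$ itself has an axis of symmetry parallel to $L$; one then checks this axis is $L$. With $L$ now an axis of symmetry of $K$, a short supporting-plane argument shows that, for every $u\perp w$ with $w\perp L$, the symmetry line of $K_u$ through $u^{\perp}\cap L$ must lie in $w^{\perp}$, so every plane containing $L$ is a plane of symmetry of $K$, and $K$ is a body of revolution with axis $L$.
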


 If $K$ is a body of revolution in $\Rn$, for $n \geq 4$, then all hypersections and projections of $K$ are $(n-1)$-dimensional bodies of revolution (see Remark \ref{boca} in Section \ref{revov}). Generalization H(iii) is the converse of this result. In Section \ref{revov}, we prove this converse when we know that all sections (or projections) through a fixed point $p$ are bodies of revolution,  under the additional assumption that  $K$ has a unique diameter.

\begin{theorem}\label{nariz}
Let $K\subset \mathbb{R}^n, n\geq 4$, be a convex body. Suppose that $K$ has a unique diameter $D$ and there is a point $p\in \mathbb{R}^n$ such that $p\notin D$ and for every hyperplane $\Pi$, passing through $p$, the section $\Pi \cap K$ is a $(n-1)$-body of revolution. Then $K$ is a body of revolution with axis $D$.
\end{theorem}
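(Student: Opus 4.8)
The plan is to extract the rotational symmetry of $K$ from the sections through $p$ that also contain the diameter $D$, and the uniqueness of $D$ is what pins down their common axis. Write $\ell$ for the line spanned by $D$. First I would record the role of the uniqueness hypothesis: if $\Pi$ is any hyperplane through $p$ with $D\subset\Pi$, then $D\subset \Pi\cap K\subset K$, so $\operatorname{diam}(\Pi\cap K)=\operatorname{diam}(K)=\len(D)$ and $D$ is a diameter of the section; by uniqueness of $D$ in $K$ it is the \emph{unique} diameter of $\Pi\cap K$. In particular $\Pi\cap K$ is not a ball (a ball has no unique diameter), so as an $(n-1)$-body of revolution it has a genuine axis $A_\Pi$. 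The rotation group about $A_\Pi$ preserves $\Pi\cap K$, hence permutes its diameters and so fixes the unique diameter $D$ setwise; because this group is connected, the two endpoints of $D$ are fixed individually, and the only points of $\Pi$ fixed by all rotations about $A_\Pi$ lie on $A_\Pi$. Thus both endpoints of $D$ lie on $A_\Pi$, forcing $A_\Pi=\ell$. Consequently every such section $\Pi\cap K$ is invariant under the rotations of $\Pi$ about $\ell$.

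Next I would pass to coordinates with $\ell$ the $x_1$-axis, so that the rotations of $\mathbb{R}^n$ about $\ell$ form the group $\mathrm{SO}(n-1)$ acting on $x'=(x_2,\dots,x_n)$, and write $p=(p_1,p')$. A hyperplane through $p$ containing $\ell$ has the form $\Pi_W=\mathbb{R}\times W$, where $W\subset\mathbb{R}^{n-1}$ is a hyperplane through the origin with $p'\in W$. The previous paragraph says that for each such $W$ the slice $K\cap\Pi_W$ is a body of revolution about the $x_1$-axis; equivalently, for fixed $x_1$ and $w\in W$ the membership $(x_1,w)\in K$ depends only on $\|w\|$. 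The whole theorem then reduces to the following claim about each horizontal slice: for fixed $x_1$ and each $r>0$, the indicator of $K$ is constant on the sphere $C_r=\{\,x'\in\mathbb{R}^{n-1}:\|x'\|=r\,\}$. (If $p\in\ell$, i.e. $p'=0$, the constraint $p'\in W$ is vacuous and the argument below only simplifies.)

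To prove this claim I would use that, for every hyperplane $W\ni p'$ of $\mathbb{R}^{n-1}$, the indicator of $K$ is constant on the great subsphere $C_r\cap W$. Any two such hyperplanes $W_1,W_2$ both contain the line $\mathbb{R}p'$, and $\dim(W_1\cap W_2)\ge (n-2)+(n-2)-(n-1)=n-3\ge 1$ for $n\ge 4$, so $W_1\cap W_2$ meets $C_r$; hence $C_r\cap W_1$ and $C_r\cap W_2$ overlap, forcing their two constant values to coincide. Moreover every $v\in C_r$ lies in some hyperplane $W\ni p'$, because $\dim\operatorname{span}(p',v)\le 2\le n-2$, so these subspheres cover $C_r$. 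The overlapping-constants argument then shows the indicator is globally constant on $C_r$. Running this for every $x_1$ and every $r$ yields the full $\mathrm{SO}(n-1)$-invariance of $K$ about $\ell$, i.e. $K$ is a body of revolution with axis $\ell$ (the line containing $D$).

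The main obstacle is the dimension count in the last step: one must ensure the various great subspheres genuinely overlap so that the locally constant values glue into a single global constant. This is precisely where $n\ge 4$ is used, since for $n=4$ one generally cannot place three prescribed directions ($p'$ together with two arbitrary points of $C_r$) inside a single hyperplane $W$, and must instead propagate equality through the pairwise intersections $W_1\cap W_2$. A secondary point to check carefully is the first paragraph's assertion that a non-spherical body of revolution whose diameter is unique has that diameter along its axis; the connectedness of the rotation group is exactly what rules out the two endpoints being merely interchanged rather than fixed.
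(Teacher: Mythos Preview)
Your proof is correct and follows essentially the same strategy as the paper: both restrict attention to hyperplanes through $p$ that contain $D$, use uniqueness of the diameter to force $D$ to be the axis of revolution of each such section, and then deduce that every slice of $K$ orthogonal to $D$ is a ball centered on $D$. Your overlapping-great-subspheres argument makes the final gluing step more explicit than the paper's, which selects only the finitely many coordinate hyperplanes $\operatorname{span}\{w_1,w_{i_1},\dots,w_{i_{n-3}}\}$ and asserts the conclusion directly.
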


\begin{theorem}\label{nonis}
Let $K\subset \Rn$, $n\geq 4$, be a strictly convex body. Suppose that $K$ has a unique diameter $D$ and every orthogonal projection of $K$  is a $(n-1)$-body of revolution. Then $K$ is a body of revolution with axis $D$.
\end{theorem}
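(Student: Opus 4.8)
The plan is to reduce the statement to a tomographic problem on the cross-sections of $K$ taken perpendicular to the diameter $D$, exploiting that in $\mathbb{R}^{n-1}$ with $n\ge 4$ the projection data of a single slice already forces it to be round. Fix coordinates so that $D$ lies on the $x_n$-axis, with endpoints $a,b$; write $e_n$ for its direction and $c_s=(0,\dots,0,s)$ for the point of $D$ at height $s$. For a unit vector $u\perp e_n$ set $H_u=u^{\perp}$ and let $\pi_u$ be the orthogonal projection onto $H_u$, noting that $D\subset H_u$ and that $\pi_u$ preserves the $x_n$-coordinate. The first claim is that for each such $u$ the projection $\pi_u(K)$ has $D$ as its \emph{unique} diameter. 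Indeed, for any $x,y\in K$ one has $|\pi_u(x)-\pi_u(y)|\le|x-y|\le\operatorname{diam}K=|a-b|$, while $\pi_u(a)-\pi_u(b)=a-b$ has length $|a-b|$; hence $\operatorname{diam}\pi_u(K)=|a-b|$, attained by $\pi_u(D)=D$. If another chord $[\pi_u(x),\pi_u(y)]$ attained this length, the chain of inequalities would force $|x-y|=|a-b|$, making $[x,y]$ a diameter of $K$ and, by uniqueness of $D$, equal to $D$. Thus $\pi_u(K)$ has the single diameter $D$; in particular $\pi_u(K)$ is not a ball.

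Next I would pin down the axis of revolution. Since $\pi_u(K)$ is, by hypothesis, an $(n-1)$-dimensional body of revolution about some axis $\ell$ (with $n-1\ge 3$), the rotations about $\ell$ preserve $\pi_u(K)$ and hence permute its diameters; as the diameter $D$ is unique, it is fixed setwise by every such rotation. Because $SO(n-2)$ acting on the $(n-2)$-dimensional space orthogonal to $\ell$ is connected and transitive on spheres, the only segments it fixes lie on $\ell$; therefore $D\subset\ell$, i.e.\ $\ell=D$. Consequently, slicing $\pi_u(K)$ by the hyperplane $\{x_n=s\}$ perpendicular to $D$ yields an $(n-2)$-ball centered at $c_s$. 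Since $\pi_u$ preserves $x_n$, this slice equals $\pi_u(K_s)$, where $K_s:=K\cap\{x_n=s\}$. Hence, for every $u\perp e_n$ and every $s$, the projection $\pi_u(K_s)$ is an $(n-2)$-ball centered at $c_s$.

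It then remains to recover each slice from its projections. Fix $s$ and regard $K_s$ as a convex body in the hyperplane $\{x_n=s\}\cong\mathbb{R}^{n-1}$ with origin at $c_s$; the projection directions $u$ now range over all of $S^{n-2}$. Let $r(u)\ge 0$ be the radius of the ball $\pi_u(K_s)$. Since the support function of a projection onto $u^{\perp}$ agrees with $h_{K_s}$ on $u^{\perp}$, we obtain $h_{K_s}(w)=r(u)$ for every unit $w\perp u$. Here the hypothesis $n\ge 4$ is essential: for any two directions $u_1,u_2$ the subspace $u_1^{\perp}\cap u_2^{\perp}$ of $\mathbb{R}^{n-1}$ has dimension at least $n-3\ge 1$, so it contains a unit vector $w$, giving $r(u_1)=h_{K_s}(w)=r(u_2)$. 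Thus $r$ is a constant $r$, and since every unit $w$ is orthogonal to some $u$ we get $h_{K_s}(w)=r\,|w|$ for all $w$, so $K_s$ is an $(n-1)$-ball centered at $c_s$. As every cross-section of $K$ perpendicular to $D$ is a ball centered on $D$, the body $K$ is a body of revolution with axis $D$. (Strict convexity is used only to keep the slices $K_s$ and the projections $\pi_u(K)$ genuinely full-dimensional and non-degenerate, matching the standing hypotheses.)

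I expect the main obstacle to be precisely the dimensional count in the last step: this is where $n\ge 4$ enters and where the shadow information of a single slice becomes rich enough to force roundness. The analogous recovery fails in $\mathbb{R}^{3}$, since there the projections $\pi_u(K_s)$ are merely centrally symmetric segments, which is exactly why the planar-shadow situation must be handled by the separate and more delicate argument of Theorem \ref{nerd}. The other point requiring care is to make rigorous that slicing and projecting commute and that the unique diameter of $K$ survives projection; both follow from the elementary length inequalities above, but they must be stated precisely to justify identifying $\pi_u(K)\cap\{x_n=s\}$ with $\pi_u(K_s)$ and the axis of $\pi_u(K)$ with $D$.
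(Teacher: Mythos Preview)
Your proof is correct and follows the same overall strategy as the paper: first show that for every direction $u\perp D$ the projection $\pi_u(K)$ inherits $D$ as its unique diameter and hence has $D$ as its axis of revolution, and then conclude that all cross-sections of $K$ orthogonal to $D$ are balls centered on $D$. The paper carries out the first step in essentially the same way (using that $D$ is a binormal and that a body of revolution without a unique diameter cannot have one), but then black-boxes the second step into Lemma~\ref{damaris}, whose proof is only sketched as a variant of Lemma~\ref{gretel} and ultimately rests on the two-dimensional midpoint Lemma~\ref{alma}. Your support-function argument on the slices $K_s$ is a clean, self-contained replacement for that lemma: it makes explicit exactly where $n\ge 4$ enters (the intersection $u_1^{\perp}\cap u_2^{\perp}$ inside $\{x_n=s\}$ is nontrivial), and it justifies more carefully than the paper does both the uniqueness of the projected diameter and the identification of $\pi_u(K)\cap\{x_n=s\}$ with $\pi_u(K_s)$. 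The two arguments are therefore not genuinely different, but yours is more explicit and avoids the auxiliary lemmas; as a side benefit, your route shows that strict convexity is barely used, whereas the paper needs it for Lemma~\ref{gretel}/\ref{damaris}.
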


A more general result, without the diameter assumption, has recently been obtained by B. Zawalski \cite{zaw} in the case where  $K$ is an  origin symmetric convex body with boundary of class $C^3$,  and the point $p$ is the origin.

\newpage

\section{Definitions and auxiliary results}

 We refer to \cite[Chapter 0]{Gar} for the following  definitions involving convex bodies. A {\it body} in $\mathbb{R}^n$ is a compact set which is equal to the closure of its nonempty interior.  A {\it convex body} is a body $K$ such that for every pair of points in $K$, the segment joining them is contained in $K$. A convex body is {\it strictly convex} if its boundary does not contain a line segment. A body  $K$ is {\it origin symmetric} if whenever $x\in K$, it follows that $-x \in K$. A body $K$ is {\it centrally symmetric} if a translate of $K$ is origin symmetric, {\it i.e.} if there is a vector $c \in \mathbb{R}^n$ such that $K-c$ is origin symmetric. 

A {\it chord} of a convex body $K$ is any line segment in $K$ whose endpoints are on the boundary of $K$.  The {\it extreme points} of a chord are the endpoints of the line segment. A {\it diameter} of $K$ is a chord of maximal length. For each unit vector $\xi \in \mathbb{R}^n$, a chord parallel to $\xi$ of maximal length is called a {\it diametral chord} of $K$.  

For $n\geq 3$, we denote by $O(n)$ the \textit{orthogonal group}, \textit{i.e.}, the set of all the isometries of $\Rn$ that fix the origin. Let $K\subset \Rn$ be a convex body, let $\Pi$ be an affine hyperplane, and $p$ be a point in $\Pi$. We denote by $O(\Pi,p,n-1)$ the set of all isometries of $\Pi$ that fix $p$. When it is clear which affine  hyperplane $\Pi$ and point $p$ we are considering, we will abuse the notation and write $O(n-1)$ instead of $O(\Pi,p,n-1)$.

The section $\Pi \cap K$ is said to be \textit{symmetric} if there exists a non-trivial $\Omega\in O(n-1)$ such that
\[
\Omega(\Pi \cap K)=\Pi\cap K.
\]

\begin{definition}\label{rotation} Let $K \subset \Rn$ be a convex body, $n \geq 3$, and let $L$ be a line passing through the origin. We denote by $R_{L}:\Rn \rightarrow \Rn$ the element of $O(n)$ that acts as the identity on the line $L$, and sends $x$ to $-x$  on the hyperplane $L^{\perp}$. The line $L$ is said to be an axis of symmetry of $K$ if the following relation holds,
\[
R_{L}(K)=K.
\]

Note: When the line $L$ does not pass through the origin, we will abuse the notation and denote also by $R_L$ the function that acts as the identity on $L$, and sends $p+x$ to $p-x$ for every $p\in L$ and $x \in p+L^\perp$.

\end{definition}

We observe that if $L$ is an axis of symmetry of $K$, on the one hand, all sections of  $K$ by hyperplanes orthogonal to $L$ are centrally symmetric with center at $L$; on the other hand, all sections of $K$ by hyperplanes containing  $L$ have $L$ as a line of symmetry, \textit{i.e.}, $R_L$ restricted to each hyperplane containing $L$ is a reflection with respect to $L$.  Due to this property, the notion of axis of symmetry of a convex body will play an important role in the  proof of our Theorems. 

We will make frequent use of the following Remark in our proofs.

\begin{remark}\label{lineo}
 Let $K\subset  \mathbb{R}^n$ be an origin symmetric body. Let $H$ be a hyperplane passing through $o$. If the section $K\cap H$ has an axis of symmetry $L$, then $o \in L$.  (Similarly, if $K \cap H$ has a $k$ dimensional plane of symmetry $M$, then $o \in M$.)
\end{remark}

If $K\subset \Rt$ satisfies the property that for all the lines $L$ passing through the origin, $R_L(K)=K$, then $K$ is an sphere. Indeed, it follows from the hypothesis that all the sections of $K$ are centrally symmetric, and therefore $K$ satisfies condition (2) of \cite {B2}, implying that $K$ is an ellipsoid. But the unique ellipsoid with an infinite number of axes of symmetry, not all contained in a plane, is the sphere.
A stronger version of this result was proven in \cite{jemomo}, however it is not stated there as a theorem. Rather, it follows from the proof of 
Theorem 1 in \cite{jemomo}. For the convenience of the reader, we state it and prove it as a theorem here. 

 \begin{theorem} \label{aux} Let $K\subset \Rt$ be a convex body, $H$ be a plane, and $p\in H$ be a point. If every line $L$  contained in $H$ and passing through $p$ is an axis of symmetry of $K$, then $H$ is a plane of symmetry of $K$. Furthermore, $K$ is a  centrally symmetric body of revolution, whose axis is the line orthogonal to $H$ passing through $p$.
\end{theorem}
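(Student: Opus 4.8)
The plan is to treat this as a statement about the symmetry group of $K$ and to extract from the hypothesized reflections a full circle of rotations about the line orthogonal to $H$. After a rigid motion I may assume that $p$ is the origin $o$, that $H$ is the $xy$-plane, and I write $\ell$ for the $z$-axis, i.e. the line through $p$ orthogonal to $H$. For each angle $\theta$ let $L_\theta\subset H$ be the line through $o$ spanned by $(\cos\theta,\sin\theta,0)$. By hypothesis $R_{L_\theta}(K)=K$ for every $\theta$, where $R_{L_\theta}$ is the half-turn about $L_\theta$, that is, the rotation by the angle $\pi$ whose axis is $L_\theta$; as a map it is $x\mapsto 2(u_\theta\cdot x)u_\theta-x$ with $u_\theta=(\cos\theta,\sin\theta,0)$.

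The key step is the elementary observation that the composition of two such half-turns is a rotation about $\ell$. Writing $R_{L_\theta}$ as a $3\times 3$ matrix and multiplying, I expect to find that $R_{L_\theta}\circ R_{L_\phi}$ fixes the $z$-coordinate and acts on the $xy$-plane as the planar rotation through the angle $2(\theta-\phi)$; in other words it is the rotation about $\ell$ through $2(\theta-\phi)$. Since both factors preserve $K$, so does their composition. Letting $\theta-\phi$ range over all values, the angle $2(\theta-\phi)$ sweeps out the whole circle, so $K$ is invariant under every rotation about $\ell$. This is precisely the statement that $K$ is a body of revolution whose axis is $\ell$, the line orthogonal to $H$ through $p$.

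It remains to upgrade this to the two reflectional conclusions, and both will now drop out by composing symmetries that are already available. First, a body of revolution about $\ell$ is automatically invariant under reflection across any plane containing $\ell$, since such a reflection preserves both the distance to $\ell$ and the height along $\ell$; in particular the reflection $\sigma$ across the $xz$-plane satisfies $\sigma(K)=K$. A direct computation gives $\sigma\circ R_{L_0}=(x,y,z)\mapsto(x,y,-z)$, the reflection across $H$; being a composition of two symmetries of $K$, this shows $H$ is a plane of symmetry of $K$. Finally, composing the reflection across $H$ with the half-turn $R_\ell$ about $\ell$ (available from the revolution property as the rotation through $\pi$) yields the antipodal map $x\mapsto -x$, so $K=-K$ and $K$ is centrally symmetric with centre $o=p$.

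The argument is essentially group-theoretic bookkeeping, and I expect strict convexity to play no role; the single point requiring genuine care is the composition-of-two-half-turns computation, namely verifying that the resulting rotation axis is exactly $\ell$ and that the rotation angle $2(\theta-\phi)$ is free to realize every value. Once that is in hand, the body-of-revolution conclusion is immediate, and the plane of symmetry and central symmetry follow formally by composing the established symmetries of $K$.
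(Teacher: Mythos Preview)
Your proof is correct and takes a genuinely different route from the paper's. The paper argues geometrically: it first shows that $H$ is a plane of symmetry by projecting boundary points onto $H$ and reflecting across the line through the projection and $p$; then it shows that every section of $K$ parallel to $H$ is a disc, by reflecting a given boundary point across suitably chosen diameters of $K\cap H$ and invoking (strict) convexity to rule out extra boundary points on vertical lines. Your argument bypasses all of this by working inside the symmetry group: the identity that the composition $R_{L_\theta}\circ R_{L_\phi}$ of two half-turns about lines of $H$ through $p$ equals the rotation about $\ell$ by angle $2(\theta-\phi)$ immediately yields invariance under the full rotation group about $\ell$, and the remaining conclusions (reflection across $H$, central symmetry) drop out by composing symmetries already in hand. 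The trade-off is that the paper's approach is concrete and pictureable but leans on a strict-convexity step that is neither in the hypotheses nor strictly necessary, whereas your approach is short, uniform, and manifestly uses only convexity (indeed, only that $K$ is a set). Your caveat about ``genuine care'' is well placed but easily discharged: on the $z$-coordinate both half-turns act as $z\mapsto -z$, so their composition fixes $z$; on the $xy$-plane each $R_{L_\theta}$ restricts to the planar reflection across $L_\theta$, and the product of two planar reflections through the origin is the rotation through twice the angle between them.
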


 \begin{proof}
{\it Step 1:} We first prove that $H$ is a plane of symmetry. Consider the section $K \cap H$. Since every line passing through $p$ is a line of symmetry for $K$, in particular, every line through $p$ is a line of symmetry for $K \cap H$. This means that $K \cap H$ is a disc with center $p$. 

Now consider a boundary point $x$ of $K$ not on $H$, and its projection $Px$ on $H$. Let $\ell_x$ be the line joining $x$ and $Px$, and let $d(x)$ be the distance between $x$ and $Px$.
The line joining $Px$ with $p$ is an axis of symmetry of $K$ and is perpendicular to $\ell_x$. Therefore, the other boundary point $x'$ on $\ell_x$ is at distance $d(x)$ from $Px$.
Since $x$ is arbitrary, we have shown that $H$ is a plane of symmetry of $K$.

{\it Step 2:} Consider the point $Px$ as in Step 1, and the point $Py$ on the line joining $Px$ and $p$, such that $d(Px,p)=d(Py,p)$.  Take the line $\ell_y$ perpendicular to $H$ passing through $Py$, and let $y,y'$ be the boundary points of $K$ on $\ell_y$. By step 1, we know that $y'$ is the reflection of $y$ with respect to $Py$.

We want to show that the distance  $d(y)$ from $y$ to $Py$ is equal to the distance $d(x)$ from $x$ to $Px$. Consider the diameter $L$ of the disc $K \cap H$ which is perpendicular to  the line $Px Py$. This diameter is also an axis of symmetry of $K$, and the plane containing $x,y,x'$ and $y'$ is perpendicular to $L$. Therefore, on this plane, $x$ gets reflected to a point $z' \in \ell_y$, and $y$ gets reflected to $z \in \ell_y$. If $z'\neq y'$ or $z \neq y$, we break the strict convexity of $K$. Therefore, $d(x)=d(y)$. 

Consider the circle centered at $p$ and passing through $Px$ and $Py$. We want to show that for every point $Pw$ on this circle, the boundary points $w$ and $w'$ on the line $\ell_w$ perpendicular to $H$ and passing through $Pw$ are at distance $d(x)$ from $Pw$. 

Let $\Lambda$ be the diameter bisecting $D$ and the line $Px Py$. Then the reflection of $x$ with respect to $\Lambda$ is on the line $\ell_w$ , and by strict convexity it must equal $w'$. Then, by considering the triangles with vertices $x, Px, p$ and $w', Pw, p$, we obtain that the distance from $w'$ to $Pw$ must be $d(x)$. This shows that the section of $K$ through $x$ parallel to $H$ is a disc, and therefore $K$ is a body of revolution with axis perpendicular to $H$ and passing through $p$.
\end{proof}

\begin{definition}\label{star}
A family of lines $\{L_1,..., L_n\}$ is called  a $n$-starline with vertex  $x_0$ if the lines $L_i$ lie on the same plane, they are concurrent at $x_0$, and the angle between two consecutive lines is $\frac{2\pi}{n}$.  
\end{definition}

If $L_1$ and $L_2$ are two lines with nonempty intersection, we denote by $\Omega (L_1,L_2)$  the set of all lines contained in the plane $\aff \{L_1,L_2\}$ and passing through the point  
$L_1\cap L_2$.

\begin{definition}
Let $L_1$ and $L_2$ be two axes of symmetry of the convex body $K$. The starline determined by $L_1$ and $L_2$, which will be denoted by $\Sigma (L_1,L_2)$, is the family of lines  
$\{T_n\}$ constructed in the following way: $T_1=L_1, T_2=L_2$, and, in general,
\[
 T_k=R_{T_{k-1}}(T_{k-2}).
\]
\end{definition}
 We observe, on the one hand, that each line in the family $\{T_n\}$ is an axis of symmetry of $K$ and, on the other hand, that $T_i\subset \Omega(T_1,T_2)$ for all $i$. 
 
\begin{proposition}\label{escandalo}
Let $L_1$ and $L_2$ be two axes of symmetry of the convex body $K\subset \Rt$. If the angle between $L_1$ and $L_2$ is $\frac{2\pi}{n}$, for some integer $n$, then $\Sigma (L_1,L_2)$ is a $n$-starline for some integer $n$; otherwise, $\Sigma (L_1,L_2)$ is a dense set in $\Omega (L_1,L_2)$ and, consequently, $\Sigma (L_1,L_2)\cap K$ is a circle.
\end{proposition}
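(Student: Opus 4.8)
The plan is to reduce the entire question to the arithmetic of the angles of the lines $T_k$ inside the plane $\Omega := \aff\{L_1,L_2\}$. First I would fix the point $O = L_1\cap L_2$ and observe that $O$ is the centroid of $K$: any $R_L$ with $R_L(K)=K$ must fix the centroid, and its fixed-point set is exactly $L$, so the centroid lies on every axis of symmetry; in particular $L_1$ and $L_2$ really do meet, $O\in\inte K$, and $S:=K\cap\Omega$ is a genuine planar convex body. Choosing coordinates in $\Omega$ centred at $O$, the restriction of each $R_{T_{k-1}}$ to $\Omega$ is the reflection across the line $T_{k-1}$, so the relation $T_k=R_{T_{k-1}}(T_{k-2})$ becomes the reflection of one line across another. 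Writing $\theta$ for the angle between $L_1$ and $L_2$ and using that ``the reflection of the line at angle $\alpha$ across the line at angle $\beta$ is the line at angle $2\beta-\alpha$'', an easy induction shows that $T_k$ is the line through $O$ at angle $(k-1)\theta$, where angles are taken modulo $\pi$ since a line is determined by its direction up to sign.

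Once this is established, the dichotomy is exactly the classical one for the orbit $\{(k-1)\theta \bmod \pi\}$. If $\theta$ is a rational multiple of $\pi$ — in particular if $\theta=\tfrac{2\pi}{n}$ — this orbit is a finite set of equally spaced angles, so the lines $T_k$ form a starline, giving the first alternative. If $\theta$ is an irrational multiple of $\pi$, then by the Weyl/Kronecker equidistribution theorem the set $\{(k-1)\theta \bmod \pi\}$ is dense in $[0,\pi)$, so $\Sigma(L_1,L_2)$ is dense in $\Omega(L_1,L_2)$.

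The substantive step is to upgrade this density into a statement about \emph{every} line. Each $T_k$ is an axis of symmetry of $K$ (as already noted before the statement), and I would show that the set of axes of symmetry through $O$ lying in $\Omega$ is closed: if $M_j\to M$ with $R_{M_j}(K)=K$, then $R_{M_j}\to R_M$ uniformly on the compact set $K$, so $R_M(x)=\lim_j R_{M_j}(x)\in K$ for every $x\in K$, whence $R_M(K)\subseteq K$ and, $R_M$ being a volume-preserving involution, $R_M(K)=K$. Combining closedness with the density of the previous paragraph, every line through $O$ in $\Omega$ is an axis of symmetry of $K$.

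Finally I would restrict to $\Omega$ exactly as in Step 1 of the proof of Theorem \ref{aux}: every line through $O$ in $\Omega$ is then a line of symmetry of $S=K\cap\Omega$, and reflecting a fixed boundary point $x$ of $S$ across all such lines sweeps out the entire circle of radius $|Ox|$ centred at $O$; convexity forces $\partial S$ to be that circle, so $S$ is a round disc centred at $O$, which is the meaning of ``$\Sigma(L_1,L_2)\cap K$ is a circle''. I expect the equidistribution and the disc argument to be routine; the point that needs care is the limiting/closedness argument that promotes the dense family of symmetry axes to the full pencil, since this is what actually converts an orbit-density statement into a genuine geometric conclusion about $K$.
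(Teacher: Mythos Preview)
Your proof is correct. Note that the paper does not actually prove this proposition: immediately after stating it, the authors write that the proofs of this and the surrounding results ``are straightforward'' and only supply the proof of Lemma~\ref{contento}. Your argument is precisely the standard one they have in mind --- the angle recursion $T_k\leftrightarrow (k-1)\theta\bmod\pi$, the rational/irrational dichotomy, the closedness of the set of symmetry axes (which is exactly the content of Lemma~\ref{contento} in the paper), and the disc argument for the section (as in Step~1 of Theorem~\ref{aux}). Two small remarks: your dichotomy ``$\theta/\pi$ rational versus irrational'' is in fact the correct one, and is slightly sharper than the proposition's literal phrasing ``$\theta=2\pi/n$ versus otherwise'' (a rational angle not of the form $2\pi/n$ still yields a finite starline, not a dense set); and you do not need the full strength of Weyl equidistribution --- density of $\{(k-1)\theta\bmod\pi\}$ for $\theta/\pi$ irrational follows already from the elementary pigeonhole/Kronecker argument.
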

The proofs of the following results and Lemmas are straightforward, so we will only give the proof of Lemma \ref{contento}. 
\begin{itemize}
\item[I.] Let $\Phi$ be a planar convex body and let   $\{L_1,..., L_n \}$ be the collection of all its lines 
of symmetry. Then,  $\{L_1,...,L_n \}$ is an $n$-starline. 
 
\item[II.] Let $K \subset \Rt$ be a convex body and let $\{H_i \}$ be a sequence of hyperplanes that intersect $\inte K$. Suppose that $H_i \rightarrow H$, $L_i \subset H_i$ is an axis of symmetry (respectively, $p_i \in H_i$ is a center of symmetry) of $H_i \cap K$, and $L_i \rightarrow  L$ ($p_i \rightarrow  p$); then, $L$ is an axis of symmetry (respectively, $p$ is a center of symmetry) of $H \cap K$. 
 
\item[III.] Let $\{K_i\}$ be a sequence of planar convex bodies such that  $K_i \rightarrow K$ and, for every $i\in\mathbb{N}$, the body $K_i$ has two axes of symmetry determining an angle $\theta_i$. If 
$\lim_{i\to\infty}\theta_i=0$ then  $K$ is a disc.
\end{itemize}

\begin{lemma}\label{contento}
Let $K\subset \Rt$ be a convex body. Suppose that $\{L_n\} \subset \Rt$ is a sequence of axes (hyperplanes) of symmetry of $K$, and $L$ is a line (hyperplane) such that $L_n \rightarrow L$. Then $L$ is an axis (hyperplane) of symmetry of $K$.
\end{lemma}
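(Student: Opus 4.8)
The plan is to exploit two facts: first, that the isometry $R_L$ (whether a half-turn about a line or a reflection in a hyperplane) depends continuously on the defining flat $L$; and second, that $R_L$ is an involution. Granting these, the relation $R_{L_n}(K)=K$ passes to the limit by a short closedness argument, and the involution property upgrades an inclusion to an equality.

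First I would record the explicit formula for $R_L$. Writing an axis as $L=a+\mathbb{R}u$ with unit direction $u$ and base point $a$, we have $R_L(x)=a+2\big((x-a)\cdot u\big)u-(x-a)$; for a hyperplane with unit normal $\nu$ through $a$, we have $R_L(x)=x-2\big((x-a)\cdot\nu\big)\nu$. In either case $R_{L_n}\to R_L$ pointwise: the convergence $L_n\to L$ means that the direction (respectively normal) vectors and the base points converge, and the displayed formulas are continuous in these data. Hence for each fixed $x$, $R_{L_n}(x)\to R_L(x)$. Here one uses that $u$ and $-u$ (respectively $\nu$ and $-\nu$) define the same isometry, so the sign ambiguity in the limiting direction is harmless.

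Next, I would take an arbitrary point $x\in K$. Because each $L_n$ is an axis (hyperplane) of symmetry, we have $R_{L_n}(x)\in R_{L_n}(K)=K$ for every $n$. Since $R_{L_n}(x)\to R_L(x)$ and $K$ is closed, it follows that $R_L(x)\in K$. As $x\in K$ was arbitrary, this gives $R_L(K)\subseteq K$.

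Finally, I would pass from this inclusion to equality using the involution property $R_L\circ R_L=\mathrm{id}$. Applying $R_L$ to the inclusion $R_L(K)\subseteq K$ yields $K=R_L(R_L(K))\subseteq R_L(K)$, whence $R_L(K)=K$; that is, $L$ is an axis (hyperplane) of symmetry of $K$. The only step requiring any care is the continuity of the map $L\mapsto R_L$, but since it reduces to the continuity of the elementary vector formulas above, there is no genuine obstacle here; the closedness of $K$ together with the involution property does the rest.
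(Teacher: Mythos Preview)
Your argument is correct and takes a different route from the paper's. The paper proceeds section-wise: for each $q\in L\cap K$ it chooses $q_n\in L_n\cap K$ with $q_n\to q$ (invoking Schneider's Theorem~1.8.7), considers the orthogonal planes $\Gamma_n\perp L_n$ through $q_n$ and $\Gamma\perp L$ through $q$, and then uses the auxiliary limit result II (limits of centers of symmetry of sections) to conclude that $\Gamma\cap K$ is centrally symmetric about $q$; varying $q$ gives that $L$ is an axis of symmetry. Your approach instead works globally with the isometry $R_L$: you note that the explicit formula for $R_L$ depends continuously on the defining flat, pass to the limit pointwise to obtain $R_L(K)\subseteq K$ by closedness, and then use the involution identity $R_L^2=\mathrm{id}$ to upgrade the inclusion to equality. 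This is more elementary---it avoids both the section-convergence machinery and the auxiliary statement II---and it handles the line and hyperplane cases uniformly with no extra work. The paper's section-wise viewpoint, on the other hand, fits the style of the surrounding arguments and makes direct contact with the characterization of an axis of symmetry via centrally symmetric perpendicular sections used elsewhere in the paper.
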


\begin{proof}
Since $L_n \rightarrow L$, by  \cite[Theorem 1.8.7]{sch}, for all $q\in L \cap K$, there exists a sequence $q_n \in L_n \cap K$ such that $q_n \rightarrow q$. We denote by $\Gamma_n$  the orthogonal hyperplane to $L_n$ passing through $q_n$, and by $\Gamma$ the orthogonal hyperplane  to $L$ passing through $q$. Since $L_n$ is an axis of symmetry of  $K$, $\Gamma_n \cap K$ is centrally symmetric with center at $q_n$. By virtue of the fact that $L_n \rightarrow L$ and $q_n \rightarrow q$, we have $\Gamma_n \rightarrow \Gamma$. Thus $\Gamma_n \cap K \rightarrow \Gamma \cap K$. From II, it follows that $\Gamma \cap K$ is centrally symmetric with center at $q$. Thus $L$ is an axis of symmetry of $K$.
\end{proof} 

\begin{lemma}\label{chinita}
Let $M\subset \Rt$ be a planar convex body contained in the plane $\Pi$ and let $a,b$ be two points in $\Pi$. 
Let $\{\Pi_k\}_{k=1}^{\infty}$ be a sequence of planes, $\Pi_k$ containing $L(a,b)$ and let $\{M_k\}_{k=1}^{\infty}$ be a sequence of planar convex bodies, 
$M_k \subset \Pi_k,k=1,2..$. Suppose that, for each $k$, $M_k$ has two lines of symmetry $l_k,m_k$ so that $a\in l_k, b\in m_k$ and 
\begin{eqnarray}\label{sombras}
M_k \rightarrow  M.
\end{eqnarray}
Then $M$ has two lines of symmetry $l,m$ such that $a\in l, b\in m$ and
\begin{eqnarray}\label{grey}
l_k \rightarrow  l \textrm{ and }m_k \rightarrow m.
\end{eqnarray}
\end{lemma}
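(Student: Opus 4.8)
The plan is to combine a compactness extraction with the stability of symmetry under Hausdorff limits, exactly in the spirit of item II and Lemma \ref{contento}.

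First I would fix the limiting plane. Each $M_k$ is a two-dimensional convex body with $\aff M_k=\Pi_k$, and the limit $M$ is two-dimensional with $\aff M=\Pi$; since the affine hull of a full-dimensional convex body is continuous under Hausdorff convergence, the hypothesis $M_k\to M$ forces $\Pi_k\to\Pi$, and in particular $\Pi$ also contains $L(a,b)$. Next I would extract convergent subsequences of the symmetry lines: writing $l_k$ as the line through $a$ with unit direction $u_k$, compactness of the unit sphere in $\Rt$ lets me pass to a subsequence along which $u_k\to u$, and the resulting line $l$ through $a$ lies in $\Pi$ because $l_k\subset\Pi_k\to\Pi$; a further subsequence yields $m_k\to m$ with $b\in m\subset\Pi$.

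The core step is to identify these limits as lines of symmetry of $M$. For each $k$ let $R_k$ denote the reflection of $\Rt$ across the plane $P_k$ that contains $l_k$ and is orthogonal to $\Pi_k$. Because $P_k\cap\Pi_k=l_k$ and $P_k\perp\Pi_k$, the restriction of $R_k$ to $\Pi_k$ is precisely the in-plane reflection across $l_k$, so $R_k(M_k)=M_k$ as $l_k$ is a line of symmetry of $M_k$. Since $l_k\to l$ and $\Pi_k\to\Pi$, the planes satisfy $P_k\to P$ and hence $R_k\to R$, where $R$ is the reflection across the plane through $l$ orthogonal to $\Pi$. As a convergent sequence of isometries carries a Hausdorff-convergent sequence of bodies to the image of the limit, $M_k=R_k(M_k)\to R(M)$; comparing with $M_k\to M$ gives $R(M)=M$. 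Restricting to $\Pi$ this says $l$ is a line of symmetry of $M$ with $a\in l$, and the same argument applied to $m_k$ shows $m$ is a line of symmetry with $b\in m$. This proves the existence of $l$ and $m$.

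Finally I would upgrade to convergence of the full sequences. By the previous step every subsequence of $(l_k)$ has a further subsequence converging to a line of symmetry of $M$ through $a$, so $l_k\to l$ will follow once such a line is unique. By item I the lines of symmetry of the planar body $M$ form a starline through its centre $c$; hence if $a\neq c$ the only possible line of symmetry through $a$ is $L(a,c)$, all subsequential limits coincide, and $l_k\to l$. The analogous statement gives $m_k\to m$ whenever $b\neq c$. I expect the main obstacle to be precisely the degenerate case in which one of the points, say $a$, is the centre of $M$: then $M$ is centrally symmetric about $a$ and may admit several lines of symmetry through $a$ (for example when $M$ is a disc), so full convergence of $(l_k)$ cannot be read off from $M$ alone. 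Since two distinct lines of symmetry of a planar body meet only at its centre, at most one of the distinct points $a,b$ can be that centre, so at least one of the two convergences always holds unconditionally; recovering the other in the degenerate case requires exploiting the non-degeneracy of the configuration in which the lemma is applied.
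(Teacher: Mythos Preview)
The paper does not actually supply a proof of Lemma~\ref{chinita}: it is included in the list of results whose proofs are declared ``straightforward'' (only Lemma~\ref{contento} from that list is proved). Your compactness-plus-stability argument is exactly the kind of proof the authors have in mind, and your passage from subsequential limits to lines of symmetry of $M$ via ambient reflections $R_k\to R$ is clean and correct; this is the natural analogue of item~II and of the proof given for Lemma~\ref{contento}.

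Your identification of the degenerate case is accurate and is a genuine caveat the paper glosses over. As you observe, if $a$ happens to be the centre of $M$ and $M$ has more than one line of symmetry, then uniqueness of the line of symmetry through $a$ fails and the subsequence argument no longer forces convergence of the full sequence $(l_k)$; the lemma as stated need not hold in that situation without further input. Your remark that at most one of the two distinct points $a,b$ can be the centre is correct, so at least one of the two convergences always follows. In the paper's applications (continuity of $f_\theta$, $g_\theta$ and Lemma~\ref{astrud}) the points $q(\theta)$ and $m(\theta)$ lie off the centre $o$ of the limiting section, so the degenerate case does not arise and the intended ``straightforward'' argument goes through; but you are right that, read in isolation, the lemma requires the extra non-degeneracy you flag.
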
 
As a corollary of Lemma \ref{chinita} we have
\begin{eqnarray}\label{grecia}
l_k \cap m_k \rightarrow  l \cap m.
\end{eqnarray}

\section{Sections and projections of bodies of revolution}\label{revov}

In this section we obtain converses of the following remark,   under the additional assumption that the body $K$ has a unique diameter. 

\begin{remark}\label{boca}If $K$ is a body of revolution in $\Rn$,  $n \geq 4$, then all hypersections and projections of $K$ are $(n-1)$-dimensional bodies of revolution. 
\end{remark}

For completeness, we begin by giving a proof of  Remark \ref{boca}.

\begin{proof}
 \textit{(i)  Hypersections:} We denote by $L$ the line containing the axis of revolution of $K$. Let $\Pi$ be a hyperplane such that $\Pi \cap \inte K\not= \emptyset$, and let $u$ be a unit normal vector of $\Pi$. Let $\Delta$ be a two dimensional plane containing $u$ and $L$, \textit{i.e.}, $\Delta$ is perpendicular to $\Pi$ and $L\subset \Delta$. We will show that $\Pi \cap K$ is a body of revolution with axis $\Pi \cap \Delta$. 

Let $\Gamma$ be a hyperplane perpendicular to $L$ such that $(\Gamma \cap \Pi) \cap \inte K \not= \emptyset$. Since $K$ is a body of revolution $\Gamma \cap K$ is a $(n-1)$-Euclidean ball  with centre at $q:=\Gamma \cap L$. The line $\Gamma \cap \Delta$ passes through $q$ because $L \subset \Delta$, so it is  a diameter of the ball  $\Gamma \cap K$  (See Figure \ref{rev}). Consequently $(\Gamma \cap \Pi)\cap K$ is a $(n-2)$-Euclidean ball  with centre on the line $\Gamma \cap \Delta$ and also on the line $\Pi \cap \Delta$. Since $\Gamma$ is arbitrary, it follows that $\Pi \cap K$ is a body of revolution whose axis is the line $\Pi \cap \Delta$.

 \begin{figure}
\centering
\includegraphics [width=5.5in] {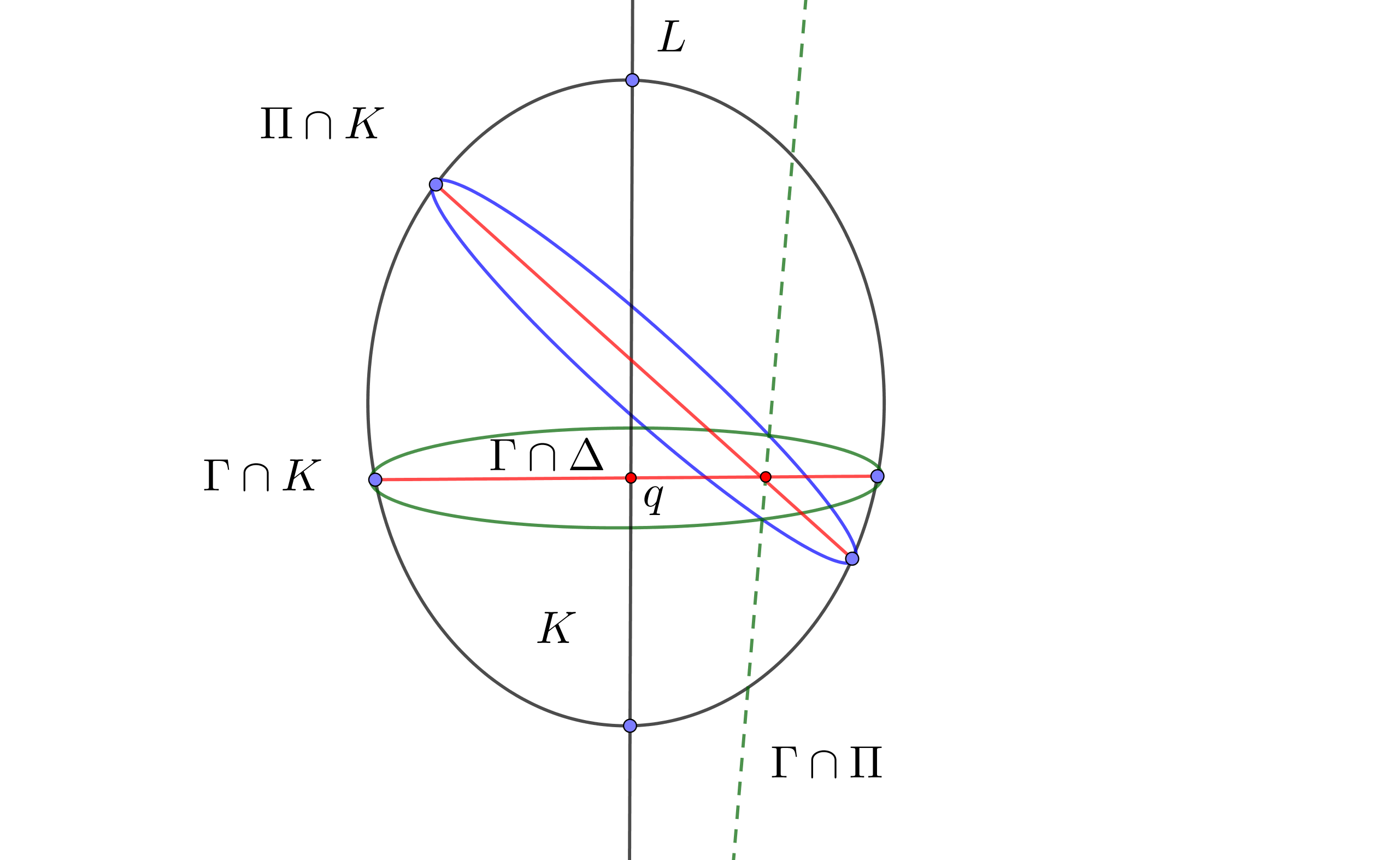} 
\caption{A section of a body of revolution is a body of revolution.}
\label{rev}
\end{figure}

 \textit{(ii) Projections: } Let $\Pi$ be a hyperplane and let $u$ be a unit normal vector of $\Pi$. We denote by $\phi_u:\Rn\rightarrow \Pi$ the orthogonal projection parallel to $u$, and by $M$ and by $K_u$ the sets $\phi_u(L)$, $\phi_u(K)$, respectively. We will prove that $K_u$ is a body of revolution with axis $M$, by showing the every $(n-2)$-section of $K_u$ perpendicular to $M$ is a sphere with centre on $M$.

Let $N\subset \Pi$ be an affine subspace of dimension $n-2$ orthogonal to $M$, and such that $N\cap \inte K_u\not=\emptyset$. Let $\Gamma=\phi^{-1}_u(N)$ and $\Delta=\phi^{-1}_u(M)$.  By (i), $\Gamma \cap K$ is a body of revolution with axis $\Gamma\cap \Delta$ (see Figure \ref{otra}). Notice  that $\Gamma\cap \Delta$ is parallel to $u$. Thus $\phi_u(\Gamma \cap K)=N\cap K_u$ is a sphere with centre at $M$. Since $N$ is arbitrary, it follows that $K_u$ is a body of revolution with axis of revolution $M$.
\end{proof}

\begin{figure}
\centering
\includegraphics [width=5.5in] {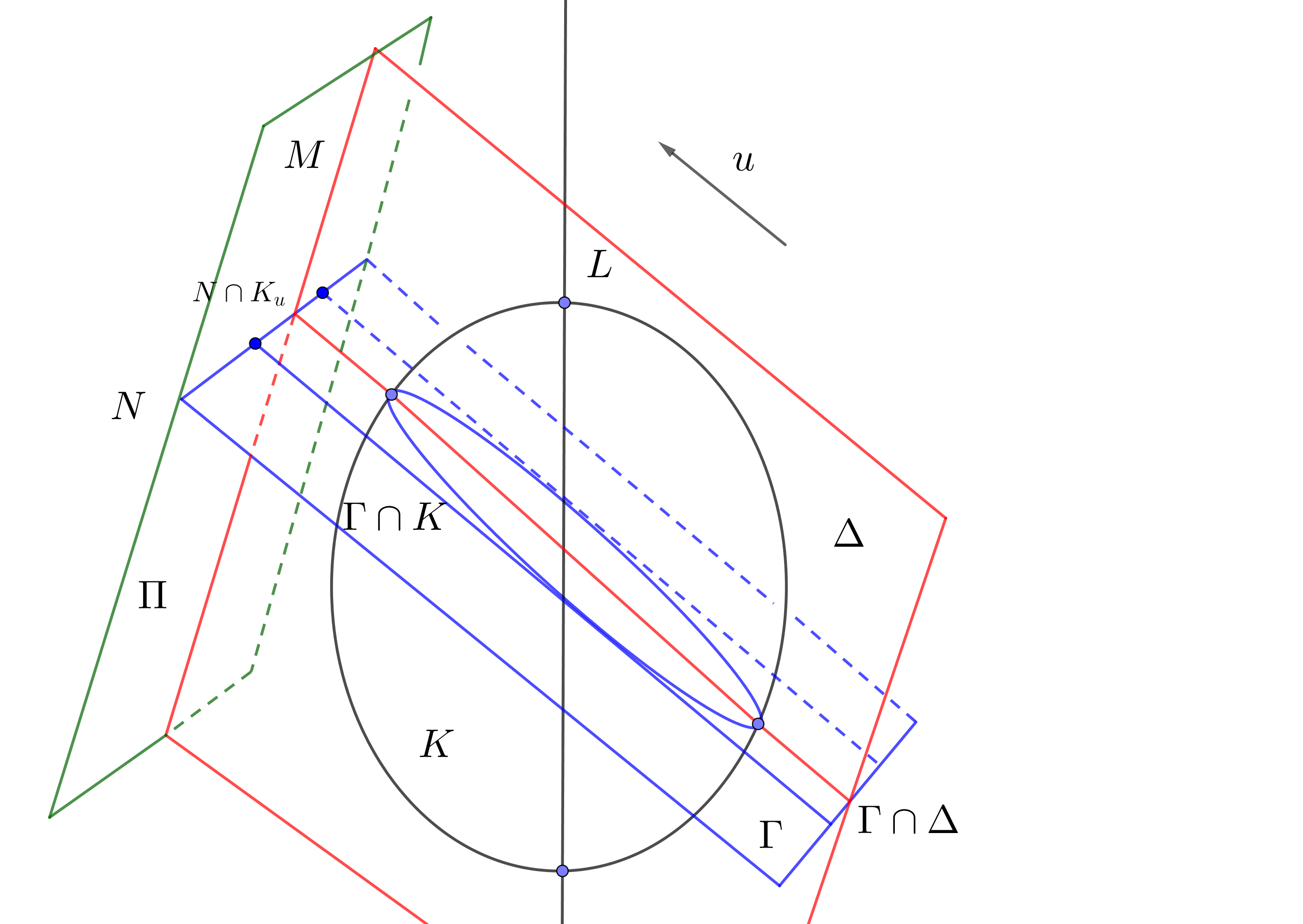} 
\caption{A projection of a body of revolution is a body of revolution.}
\label{otra}
\end{figure}

Now we prove the converse for sections, under the assumption that $K$ has a unique diameter. 
  

{\bf Proof of Theorem \ref{nariz}.}

\begin{proof}
Fix an arbitrary point $x \in D$ as the origin of coordinates.   Let $w_0$ be a unit  vector parallel to $D$. We will show that $w^{\perp}_{0} \cap K$ is a sphere with centre at $x$. 

Since $p\notin D$, there exists a unit vector $w_1$ in $\aff\{w_0,p\}$  such that $w_1\perp w_0$.   We choose   any orthonormal basis  of  $w^{\perp}_{0}$ which contains $w_1$ as one of its vectors, {\it i.e. }, $w^{\perp}_{0}=span(w_1, w_2,w_3,...,w_{n-2},w_{n-1})$.

For each  choice $\{w_{i_1},\ldots,w_{i_{n-3}}\}$ of $n-3$ vectors from the set 
\[
\{w_2,w_3,...,w_{n-2}, w_{n-1} \} \subset w^{\perp}_{0},
\]
we consider the hyperplane
\[
\widetilde{\Pi}:=\Pi(w_{i_1},\ldots,w_{i_{n-3}})=p+\textrm{span}\{ w_0,w_1,w_{i_1},\ldots,w_{i_{n-3}}\},
\] passing through $p$. By hypothesis, we have that  $\widetilde{\Pi} \cap K$ is a $(n-1)$-body of revolution. 

Since $D$ is the unique diameter of  $\widetilde{\Pi}  \cap K$, $D$ must be its axis of revolution.   
Thus, 
the section 
\[
\aff \{w_1, w_{i_1},\ldots,w_{i_{n-3}}\} \cap (\widetilde{\Pi} \cap K)
\] 
is an $(n-2)$-dimensional Euclidean ball with centre at $x$. Repeating the argument for every choice of $\{w_{i_1},\ldots,w_{i_{n-3}}\}$ of $n-3$ vectors from the set \\
$\{w_2,w_3,...,w_{n-2}, w_{n-1} \}$,
we conclude that $
w^{\perp}_{0} \cap K$ 
is a  sphere with centre at $x$. Finally, by the arbitrariness of the choice of the point $x\in \inte D$, the theorem follows.
\end{proof}

\subsection{Proofs of Theorems \ref{nerd} and \ref{nonis}} 

In order to prove the converse for projections, we first need to establish some lemmas. 
For $n\geq 2$, let $W\subset \Rn$ be a set, and let $\Pi \subset \Rn$ be a hyperplane with unit normal vector $u$ (if $\Pi$ passes through the origin $o$, we denote it by $u^{\perp}$). Let $\phi_u:\Rn\rightarrow \Pi$ be the orthogonal projection parallel to $u$, and by $W_u$ the set $\phi_u(W) \subset \Pi$.  
 If the hyperplane of projection is not specified, then we will assume that the projection is onto the hyperplane $u^{\perp}$.

\begin{lemma}\label{gretel}
Let $K\subset \Rt$ be a strictly convex body and let $L$ be a line. Suppose that, for every unit vector $u$ orthogonal to $L$, $K_u$ has an axis  of symmetry parallel to $L$. Then $K$ has an axis of symmetry parallel to $L$.  
\end{lemma}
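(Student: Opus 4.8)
The plan is to \emph{construct} the axis of symmetry explicitly, as the vertical line (parallel to $L$) through the unique topmost and bottommost points of $K$, and then to show that the prescribed symmetries of all the vertical shadows force every horizontal slice of $K$ to be centrally symmetric about that line. First I would choose coordinates $(x,y,z)$ so that $L$ is parallel to $e_3$. Then a unit vector $u$ orthogonal to $L$ is horizontal, the plane $u^\perp$ is vertical (it contains $e_3$), and the projection $\phi_u$ preserves height, i.e.\ the $e_3$-coordinate. In this language the hypothesis reads: for every horizontal $u$, the shadow $K_u\subset u^\perp$ is invariant under reflection across a \emph{vertical} line $\ell_u$ (the reflection $R_{\ell_u}$ of Definition \ref{rotation}, applied inside the plane $u^\perp$).

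The crux of the argument is to pin these mirror lines to a single vertical line. Since $K$ is strictly convex, the functional $\langle\,\cdot\,,e_3\rangle$ is maximized over $K$ at a single point $p_{\max}$ and minimized at a single point $p_{\min}$. Because $\phi_u$ preserves height and maps $K$ onto $K_u$, the image $\phi_u(p_{\max})$ is the \emph{unique} highest point of $K_u$, and $\phi_u(p_{\min})$ its unique lowest point. The reflection across $\ell_u$ preserves heights, so it must send each of these extreme points to itself; as a planar reflection fixes exactly the points of its axis, I conclude $\phi_u(p_{\max}),\,\phi_u(p_{\min})\in\ell_u$, and hence $p_{\max},p_{\min}\in H_u:=\phi_u^{-1}(\ell_u)$ for every horizontal $u$.

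Next I would observe that $H_u$ is a vertical plane whose horizontal normal is the direction in $u^\perp$ orthogonal to $e_3$, so distinct $u$ give planes $H_u$ with distinct normals. Two such planes meet in a single vertical line, yet every $H_u$ contains the two distinct points $p_{\max},p_{\min}$; therefore $p_{\max}$ and $p_{\min}$ must lie on a common vertical line $L'$ (in particular the segment $p_{\max}p_{\min}$ is parallel to $L$), and each $H_u$ contains all of $L'$. Since the projection of the vertical line $L'$ is a vertical line inside $u^\perp$ contained in $\ell_u$, it follows that $\ell_u=\phi_u(L')$ for \emph{every} horizontal $u$.

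To finish, fix a height $z$ and set $K_z:=K\cap\{e_3=z\}$ and $c_z:=L'\cap\{e_3=z\}$. For each horizontal $u$ the slice of $K_u$ at height $z$ is precisely the one-dimensional shadow of $K_z$ onto the horizontal direction $v\perp u$, and the vertical mirror symmetry of $K_u$ about $\ell_u=\phi_u(L')$ makes this interval symmetric about $c_z$. Letting $v$ range over all horizontal directions, $K_z$ and its point reflection in $c_z$ have equal shadows on every line; as a planar convex body is determined by its one-dimensional shadows (support function), $K_z$ is centrally symmetric about $c_z$. Since this holds for all $z$, we get $R_{L'}(K)=K$, so $L'$ is an axis of symmetry of $K$ parallel to $L$. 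The one genuinely delicate point is the crux step of the second paragraph: a priori the hypothesis controls each shadow in isolation and the mirror positions $\ell_u$ could drift incoherently with $u$, and it is exactly strict convexity—through the uniqueness of $p_{\max}$ and $p_{\min}$—that anchors all the mirror lines to the single vertical line $L'$.
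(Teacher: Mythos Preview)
Your proof is correct and follows essentially the same approach as the paper's: both identify the extreme points of $K$ in the $L$-direction (the paper's $E,F$, your $p_{\max},p_{\min}$), use strict convexity to show they project into every mirror line $\ell_u$, deduce that the segment between them is parallel to $L$ and that $\ell_u=\phi_u(L')$, and then finish by showing each horizontal slice has all one-dimensional shadows symmetric about the projected center (your support-function remark is exactly the paper's Lemma~\ref{alma}). The only cosmetic difference is that you argue uniqueness of the top/bottom points of $K_u$ directly via height-preservation, whereas the paper invokes strict convexity of $K_u$.
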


 In the proof of Lemma \ref{gretel} we use  the following lemma, whose proof is immediate.
\begin{lemma}\label{alma}
Let $W\subset \Rd$ be a planar convex body and let $p\in \Rd$. If, for every unit vector $u$, the point $\phi_u(p)$ is the midpoint of $W_u$, then $W$ is centrally symmetric with centre at $p$. 
\end{lemma}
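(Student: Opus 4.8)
The plan is to pass to the support function of $W$. By replacing $W$ with its translate $W-p$ we may assume $p=o$; the hypothesis then reads that, for every unit vector $u$, the point $\phi_u(o)=o$ is the midpoint of the one-dimensional shadow $W_u$.

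Write $h_W$ for the support function of $W$. Since we are in $\Rd$, each line of projection $u^{\perp}$ is spanned by a single unit vector $e$ (the rotation of $u$ by a right angle), and as $u$ runs over all unit vectors so does $e$. The orthogonal projection of $W$ onto the line $\RR e$ is the segment with endpoints $h_W(e)\,e$ and $-h_W(-e)\,e$, so its midpoint is $\tfrac12\bigl(h_W(e)-h_W(-e)\bigr)e$. The midpoint condition $\phi_u(o)=o$ therefore translates into the single identity $h_W(e)=h_W(-e)$, required to hold for every unit vector $e$.

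Finally I would recall the elementary relation $h_{-W}(e)=h_W(-e)$, so that the identity above reads $h_{-W}=h_W$ on the whole unit circle; since a convex body is determined by its support function, this forces $-W=W$, i.e.\ $W$ is centrally symmetric with centre at the origin, and undoing the translation yields central symmetry about $p$. There is essentially no real obstacle here: the only point requiring a little care is the bookkeeping identifying the midpoint of the shadow $W_u$ with the value $\tfrac12\bigl(h_W(e)-h_W(-e)\bigr)$, and the only external fact invoked is the standard uniqueness of a convex body with a prescribed support function. This matches the paper's remark that the proof is immediate.
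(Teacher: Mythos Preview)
Your argument is correct: reducing to $p=o$, the projection of $W$ onto the line $\RR e=u^{\perp}$ is the segment $[-h_W(-e),h_W(e)]\,e$, and the midpoint condition yields $h_W(e)=h_W(-e)$ for every unit $e$, hence $-W=W$. The paper itself gives no proof of this lemma (it is simply declared immediate), so there is nothing to compare against; your support-function computation is a perfectly standard and complete justification.
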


\textbf{Proof of Lemma \ref{gretel}.}
We choose a coordinate system such that the origin is contained in $L$. 
Let $\Pi_1,\Pi_2$ be supporting planes of $K$ perpendicular to $L$, making contact with $\partial K$ at the points $E$ and $F$, respectively. Let $u$ be a unit vector perpendicular to $L$, $K_u=\phi_u(K)$ be the projection of $K$ onto $u^\perp$. We have that $L=\phi_u(L)$, since $L\subset u^{\perp}$ for all $u$ perpendicular to $L$. We denote $\phi_u(E)=A$ and $\phi_u(F)=B$. 

By hypothesis, there exists a line of symmetry $M$ of $K_u$  that is parallel to $L$. Since $L$ and  $u^{\perp}$ 
are parallel, the line $M$ is perpendicular to $\Pi_1\cap u^{\perp}$ and to $\Pi_2\cap u^{\perp}$.  By the strict convexity of $K_u$, it follows that $M=L(A,B)$. 
Thus, we conclude that the orthogonal projection of the diametral chord $EF$ of $K$ onto $u^{\perp}$ is perpendicular to $\Pi_1$ and $\Pi_2$. By the arbitrariness of $u$ we obtain that $EF$ is perpendicular to $\Pi_1$ and $\Pi_2$, and thus parallel to $L$. 
Therefore, for the remainder of the proof, we will assume that $L$ is the line containing the segment $EF$.

 We will prove that for every plane $\Gamma$ perpendicular to $L$ with $\Gamma \cap \inte K\not= \emptyset$, the section $\Gamma \cap K$ is centrally symmetric with centre at  $\Gamma \cap L$. In order to prove this, we will show that the section $\Gamma \cap K$ and the point $\Gamma \cap L$ satisfy the conditions of Lemma \ref{alma}.

Let $w$ be a unit vector perpendicular to $L$. By hypothesis, $K_w$ is symmetric with respect to a line $M$ parallel to $L$. By the argument of the previous  paragraph, $\phi_w(L)=M$. 
Thus the chord $\phi_w(\Gamma)\cap K_w$ has its midpoint at 
$\phi_w(\Gamma)\cap M=\phi_w(\Gamma)\cap \phi_w(L)$. It follows that 
$\phi_w(\Gamma)\cap \phi_w(L)=\phi_w(\Gamma)\cap L=\phi_w(\Gamma \cap L)$, \textit{i.e.}, $\phi_w(\Gamma)\cap K_w$ has its midpoint at $\phi_w(\Gamma \cap L)$. By Lemma \ref{alma}, 
$\Gamma \cap K$ is centrally symmetric with centre at $\Gamma \cap L$. Varying $\Gamma$, always perpendicular to $L$ and such that $\Gamma \cap \inte K\not= \emptyset$, we conclude that $EF$ is an axis of symmetry of $K$.  
\qed

With almost the exact same arguments, one can also prove the following Lemma.

\bigskip
\begin{lemma}\label{damaris}
Let $K\subset \Rn$ be a strictly convex body, $n> 3$ and let $L$ be a line. Suppose that,  for every unit vector $u$ orthogonal to $L$, $K_u$ has an axis  of revolution  parallel to $L$. Then  $K$ has an axis of revolution parallel to $L$. 
\end{lemma}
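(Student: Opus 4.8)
The plan is to mirror the proof of Lemma \ref{gretel} step by step, the only genuinely new ingredient being that the planar tool Lemma \ref{alma} must be replaced by a rigidity statement for projections that are balls. Fix a unit vector $\ell$ spanning the direction of $L$, and let $\Pi_1,\Pi_2$ be the two supporting hyperplanes of $K$ perpendicular to $L$, touching $\partial K$ at the points $E$ and $F$ (unique by strict convexity). As in Lemma \ref{gretel}, the first step is to prove $EF\parallel L$, after which I may replace $L$ by the parallel line $L(E,F)$ and assume from then on that $L=L(E,F)$.

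For this first step, fix a unit vector $u\perp L$ and consider $K_u=\phi_u(K)\subset u^{\perp}$. Since $K$ is strictly convex, so is $K_u$: a boundary segment of $K_u$ would lift, via the supporting hyperplane over it, to a boundary segment of $K$. By hypothesis $K_u$ is a body of revolution with axis $M_u\parallel L$. Because $\ell\perp u$, the linear functional $\langle\,\cdot\,,\ell\rangle$ is unaffected by $\phi_u$, so its maximizer on $K_u$ is $\phi_u(E)$ and its minimizer is $\phi_u(F)$; these are the two apexes of the revolution body $K_u$ and hence lie on its axis $M_u$. Thus $M_u=L(\phi_u(E),\phi_u(F))$, and since $M_u\parallel L$ we get $\phi_u(E-F)\parallel\ell$. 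Writing $E-F=\alpha\ell+v$ with $v\in L^{\perp}$ and noting $\phi_u(v)\in L^{\perp}$, the relation $\phi_u(E-F)\parallel\ell$ forces $\phi_u(v)=0$, i.e. $v\parallel u$. As this holds for every unit $u\perp L$ while $L^{\perp}$ has dimension $n-1\ge 2$, we conclude $v=0$, that is $EF\parallel L$. Henceforth $L=L(E,F)$, and consequently $\phi_w(L)=M_w$ for every unit $w\perp L$.

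In the second step I show that every hyperplane $\Gamma$ perpendicular to $L$ with $\Gamma\cap\inte K\neq\emptyset$ meets $K$ in an $(n-1)$-ball centered at $c:=\Gamma\cap L$; this is exactly the statement that $K$ is a body of revolution with axis $L$. Fix such a $\Gamma$ and set $V:=\Gamma\cap K$, an $(n-1)$-dimensional body. For each unit $w\perp L$ we have $\Gamma\parallel w$, hence $\phi_w(V)=\phi_w(\Gamma)\cap K_w$, where $\phi_w(\Gamma)=\Gamma\cap w^{\perp}$ is a hyperplane of $w^{\perp}$ orthogonal to the axis $M_w=\phi_w(L)$. Since $K_w$ is a body of revolution about $M_w$, this section is an $(n-2)$-ball centered at $\phi_w(\Gamma)\cap M_w=\phi_w(\Gamma\cap L)=\phi_w(c)$. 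Viewing $V$ inside the $(n-1)$-dimensional affine space $\Gamma$, this says that every orthogonal projection of $V$ onto a hyperplane of $\Gamma$ (along a direction $w\perp L$) is a ball centered at $\phi_w(c)$.

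It remains to upgrade this to ``$V$ is a ball,'' which is where the planar Lemma \ref{alma} gets replaced and where the hypothesis $n>3$ becomes essential. Place the origin at $c$, so each projection is a ball centered at $0$; then for every unit $w\perp L$ and every unit direction $y$ in $\Gamma$ with $y\perp w$ one has $h_V(y)=r(w)$, since the support function of a projection is the restriction of $h_V$ to the projection hyperplane, and a ball of radius $r(w)$ centered at $0$ has constant support function $r(w)$. Fixing a direction $y_0$, the value $h_V(y_0)$ equals $r(w)$ for every $w\perp y_0$; given two directions $y_0,y_1$ in $\Gamma$, their common orthogonal complement inside $\Gamma$ is nontrivial because $\dim\Gamma=n-1\ge 3$, so a single $w$ orthogonal to both yields $h_V(y_0)=r(w)=h_V(y_1)$. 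Hence $h_V$ is constant and $V$ is an $(n-1)$-ball centered at $c$. Since $\Gamma$ was arbitrary, $K$ is a body of revolution whose axis is $L$, parallel to the original line. The main obstacle is precisely this rigidity ``a convex body in $\mathbb{R}^{m}$, $m\ge 3$, all of whose hyperplane projections are balls is a ball'': for $n=3$ the projections of $V$ would be one-dimensional segments, which are always balls and carry no information, so the conclusion genuinely needs $n-1\ge 3$.
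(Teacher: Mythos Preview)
Your proof is correct and follows exactly the route the paper indicates: the paper does not give a detailed argument for this lemma but simply remarks that ``with almost the same exact arguments'' as in Lemma~\ref{gretel} one obtains the result, and your write-up carries this out faithfully. The one place where the $n>3$ version genuinely differs from Lemma~\ref{gretel} is the replacement of the planar Lemma~\ref{alma} by the rigidity statement ``a convex body in $\mathbb{R}^{m}$, $m\ge 3$, all of whose hyperplane projections are centered balls is a centered ball,'' and your support-function argument handles this cleanly and makes explicit why the hypothesis $n>3$ (so $\dim\Gamma\ge 3$) is needed.
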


    

Now are are ready to prove Theorems \ref{nerd} and \ref{nonis}.
  

{\bf Proof of Theorem \ref{nerd}.}
 
\begin{proof}  By hypothesis, the projection of $K$ on any plane parallel to $L$ has a line of symmetry parallel to $L$. Hence, by Lemma \ref{gretel}, $L$ is an axis of symmetry of $K$.

On the other hand, if $\Phi\subset \mathbb{R}^2$ is a convex body with a line of symmetry $W$, and $q$ is a point contained in $W$, then 
in order to determine $W$ we must find two parallel supporting lines $L_1$, $L_2$ of $\Phi$, such that the distance between $q$ and $L_1$ is equal to the distance between $q$ and $L_2$ (since $W$ is equidistant from $L_1$ and $L_2$). It is clear that if $q$ does not belong to $\Phi$ such couple $L_1,L_2$ is unique. Hence, if for a unit vector $u$, $M$ is a line of symmetry of $K_u$ passing through the point $u^{\perp}\cap L$, then there exists   a pair of supporting planes $\Delta_1$, $\Delta_2$ of $K$, parallel to $u$ and $M$, and  such that $M$ is equidistant from $\Delta_1$ and $\Delta_2$. Let $w$ be a unit vector perpendicular to $u$ and $L$ and let $\Pi_1, \Pi_2$ two parallel supporting planes of $K$ perpendicular to $w$. Since $L$ is an axis of symmetry of $K$,  $R_L(\Pi_1)=\Pi_2$ and $L$ is equidistant from $\Pi_1$ and $\Pi_2$. Thus, the supporting parallel lines $\phi_u(\Pi_1)$, $\phi_u(\Pi_2)$ of $K_u$ are at the same distance from the point $u^{\perp}\cap L\in M$. Hence, by the aforesaid, $M\subset w^{\perp}$ (the origin of a system of coordinates is in $L$). Now, varying $u$ while keeping $w$ fixed, we conclude that $w^{\perp}$ is a plane of symmetry of $K$ (all the orthogonal projections of $K$ in direction perpendicular to $w$ have a line of symmetry in $w^{\perp}$). Given that all the planes containing $L$ are planes of symmetry of $K$, $K$ is a body of revolution with axis $L$.
\end{proof}

{\bf Proof of Theorem \ref{nonis}.}

\begin{proof} We denote by $L$ the line generated by $D$. We will show that $K$ and $L$ satisfy the conditions of Lemma \ref{damaris} and, consequently, we will conclude that $K$ is a body of revolution with axis $L$. Let $u$ be a unit vector orthogonal to $L$ and we take a system of coordinates such the origin is in $L$. Thus $\phi_u(L)=L$ and, since $D$ is a binormal (\textit{ i.e.}, the normal vectors of $K$ at the endpoints of $D$ are parallel to $D$), and $u$ is orthogonal to $L$, $\phi_u(D)=D$. Therefore, $\phi_u(D)$ is the unique diameter of $K_u$. We claim that $L$ is the axis of revolution  of $K_u$. Otherwise, $K_u$ would not have a unique diameter. Hence $K$ and $L$ satisfy the conditions of Lemma \ref{damaris}. 
\end{proof}

\section{Proof  of Theorem \ref{tania}} 

  We begin by proving Lemma \ref{marina}, a crucial fact needed in the proofs of Theorems \ref{tania}, \ref{sisi} and \ref{seven}. This is the main reason why our Theorems need the strict convexity hypothesis, as discussed in Observation 2 on page 4.   

 For $x\in \inte K$ we denote by $C(x)$ the family of chords of $K$ whose midpoint is $x$. Let $t(x)$ be the locus of the extreme points of the chords in $C(x)$. Lemma \ref{marina} shows that, if $K$ is origin symmetric, strictly convex and has an axis of symmetry $\Lambda$, then  the set $t(x)$ lies on the hyperplane $H$ perpendicular to $\Lambda$ passing through $p$. The heuristic idea is as follows: Suppose that a chord whose midpoint is $p$ is not contained in $H$. By reflecting the endpoints of this chord around $o$ (using the central symmetry) and around $\Lambda$ (using that it is an axis of symmetry), one obtains three boundary points of $K$ that line on the same line. This contradicts the strict convexity.

\begin{lemma} \label{marina}
Let $K\subset \mathbb{R}^n$ be a centrally symmetric strictly convex body with centre at $o$ and let $p$ be a point, $p\not=o$. Suppose that $K$ has an axis of symmetry $\Lambda$ and $p\in \Lambda$. Let $H$ be the hyperplane perpendicular to $\Lambda$ that passes through $p$. Then $t(p)=H \cap \partial K$.
\end{lemma}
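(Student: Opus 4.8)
The plan is to work in coordinates adapted to the two symmetries and to reformulate $t(p)$ as the set of those $y\in\partial K$ whose reflection $2p-y$ in $p$ again lies on $\partial K$. First I would observe that $\Lambda$ must pass through $o$: since $R_\Lambda$ is an isometry with $R_\Lambda(K)=K$ and $o$ is the unique centre of symmetry of $K$, the point $R_\Lambda(o)$ is again a centre of symmetry of $K$, whence $R_\Lambda(o)=o$ and $o\in\Lambda$. I then place $\Lambda$ along the $z$-axis with $o$ at the origin, so that $p=(0,0,h)$ with $h\neq 0$ (because $p\neq o$) and $H=\{z=h\}$. I will use the two properties recorded just after Definition \ref{rotation}: the map $R_\Lambda(x,y,z)=(-x,-y,z)$ preserves $\partial K$, and every section of $K$ by a plane perpendicular to $\Lambda$ is centrally symmetric about its intersection with $\Lambda$; in particular $H\cap K$ is centrally symmetric about $p$.

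For the inclusion $H\cap\partial K\subseteq t(p)$, I note that since $p\in\inte K$ the section $H\cap K$ is a planar convex body whose relative boundary is $H\cap\partial K$, and it is centrally symmetric about $p$. Hence for $y\in H\cap\partial K$ the point $2p-y$ lies again in $H\cap\partial K\subseteq\partial K$, so $[y,2p-y]$ is a chord of $K$ with midpoint $p$ and therefore $y\in t(p)$.

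The reverse inclusion is the crux and the expected main obstacle. Given $y=(a,b,c)\in t(p)$, the other endpoint $y'=2p-y=(-a,-b,2h-c)$ lies on $\partial K$. Applying central symmetry about $o$ to $y'$ yields $(a,b,c-2h)\in\partial K$, and applying $R_\Lambda$ and then central symmetry to $y$ yields $(a,b,-c)\in\partial K$. Thus the three points $(a,b,c)$, $(a,b,c-2h)$, $(a,b,-c)$ all lie on $\partial K$ and on the single vertical line through $(a,b)$. By strict convexity a line meets $\partial K$ in at most two points, so two of the three heights must coincide; since $h\neq 0$ excludes $c=c-2h$, we are left with $c=0$ or $c=h$. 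Applying the same argument to $y'$, which is itself a point of $t(p)$ of height $2h-c$, forces $2h-c\in\{0,h\}$, i.e. $c\in\{h,2h\}$; the only common value is $c=h$, so $y\in H$. This gives $t(p)\subseteq H\cap\partial K$ and completes the proof. The decisive idea is precisely the last step: by composing the central symmetry about $o$ with the axial symmetry $R_\Lambda$ one manufactures enough boundary points on one vertical line that strict convexity becomes conclusive.
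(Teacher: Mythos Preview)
Your proof is correct and follows essentially the same strategy as the paper's: compose the central symmetry about $o$ with the axial symmetry $R_\Lambda$ to manufacture three boundary points of $K$ on a single line parallel to $\Lambda$, and then invoke strict convexity. Your bookkeeping is somewhat tidier than the paper's---by applying the argument symmetrically to both endpoints $y$ and $y'$ and intersecting the resulting constraints $c\in\{0,h\}$ and $c\in\{h,2h\}$, you sidestep the paper's case distinction on the positions of $A$, $B$ relative to $o$ (and its separate shadow-boundary argument in the second case).
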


\begin{proof}

Suppose that there exists a chord $AB\in C(p)$ such that $AB$ is not contained in the hyperplane $H$.  We consider the two dimensional plane containing the line $\Lambda$ and the chord $AB$, and on that plane take a system of coordinates $(x_1,x_2)$  such that $p$ is the origin and  $\Lambda$ is the $x_1$-axis. Since $K$ is centrally symmetric and $\Lambda$ is an axis of symmetry, we have that the centre  $o$ lies on $\Lambda$, see Figure \ref{choco}. 
We denote by $A',B'$ the points on the plane $x_1x_2$ which are reflections of $A,B$ with respect to the line $\Lambda$, and by $\bar{A}, \bar{B}$ the images of $A,B$ under the central reflection with respect to $o$.

Let $(r,0), (a_1,a_2), (b_1,b_2)$ be the coordinates of $o$, $A$ and $B$, respectively, where $r<0$. 
Since we are assuming that the chord $AB$ is not contained in the hyperplane $x_1=0$, exchanging if needed the points $A,B$, we have only two possible cases:  $a_1<r$ and $b_1>r$, or $a_1>r$ and $b_1>r$.

In the case where $a_1<r$ and $b_1>r$, 
we denote by $M$ the half-plane $\{(x_1,x_2) :x_1>r\}$. Thus $\bar{A}\in M$ and, in particular, $\bar{A}\not=A'$. Observe that the lines $L(A,B')$, $L(A',B)$ are parallel to $\Lambda$ and equidistant from it.  Hence, $\bar{A}\in L(A',B)$ (see Figure \ref{choco}).  On the other hand, since  $p\not=o$, it follows that $\bar{A}\not=B$. Consequently, the line $L(A',B)$ contains three different boundary points of $K$, which contradicts the strict convexity of $K$.

 Similarly, 
the case $a_1>r$ and $b_1>r$ is impossible. Let $S\partial (K, \Lambda)$ denote the shadow boundary of $K$ in the direction of  $\Lambda$, which is equal to 
\begin{eqnarray}\label{sukim}
S\partial (K, \Lambda)=\{x_1=r\} \cap \partial K,
\end{eqnarray}
since $K$ is origin symmetric and $\Lambda$ is an axis of symmetry. 
If  $a_1>r$ and $b_1>r$, there would exist a point $x\in S\partial(K,\Lambda)$ such that $x\in M$, which would contradict (\ref{sukim}). 

Thus, we conclude that 
\begin{eqnarray}\label{linkey}
t(p)=\{x_1=0\}\cap \partial K.
\end{eqnarray}
\end{proof}  
 
\begin{figure}[H]
\centering
\includegraphics [width=4in] {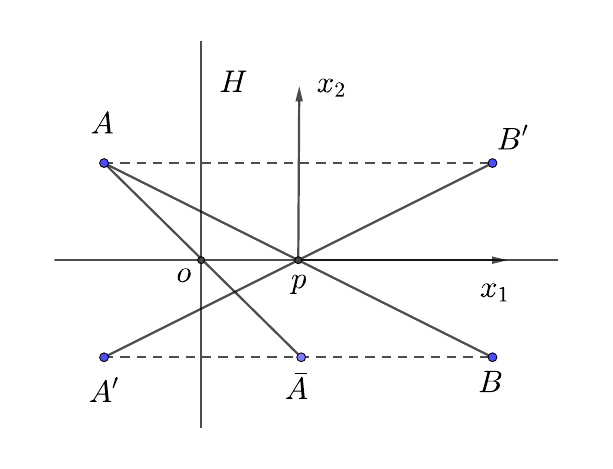} 
\caption{$t(p)$ is a planar curve.} 
\label{choco}
\end{figure}


  Before proving Theorem \ref{tania}, we will outline the main ideas of the proof in the three dimensional case:  The first step is to prove that $L(o,p)$ is an axis of symmetry of $K$. Consider the pencil of planes containing the line $L(o,p)$. Since $p$ is a revolution point, for each such plane $H$, the section $K\cap H$ has a line of symmetry that passes through $p$. But by Remark \ref{lineo}, this line of symmetry also passes through $o$, and hence it is $L(o,p)$. 

In the second step, we consider any line $L'$ passing through the point $p$ and perpendicular to $L(o,p)$. For any plane on the pencil containing $L'$, we show that the line of symmetry given by the hypothesis is perpendicular to $L'$. Hence, the union of all such lines for the pencil of planes containing $L'$ form a plane of symmetry of $K$. Combining Steps 1 and 2 gives that $L(o,p)$ is not only an axis of symmetry but also an axis of revolution.

{\bf Proof of Theorem \ref{tania}.}

\begin{proof}

We take a system of coordinates $(x_1,x_2,\dots,x_n)$ of $\Rn$ such that $p$ is the origin, the line $L(o,p)$ generated by $o$ and $p$ corresponds to the axis $x_n$, and $o$ has coordinate $(0,0,\ldots,r)$, $r>0$. Let $H$ be a hyperplane containing the line $L(o,p)$. By hypothesis, $K\cap H$ has an $(n-2)$ plane of symmetry that passes through $p$ (since $p$ is a revolution point) and through $o$ (by Remark \ref{lineo}). Therefore, the line $L(o,p)$ is an axis of symmetry of $K$, since it is contained in all such $(n-2)$ planes.

Now fix any line $L$ in $\Rn$ which is perpendicular to the $x_n$ axis. By changing the coordinate system, we may assume without loss of generality that $L$ lies on the $x_1 x_2$ plane and makes an angle $\theta \in [0, \pi]$ with the $x_1$ axis.  Let $\Gamma$ be a hyperplane containing $L$ and let $M$ be an $(n-2)$ plane of symmetry of $\Gamma \cap K$ passing through $p$. We claim that $M$ is perpendicular to $L$. On the contrary, assume that $M$ is not perpendicular to $L$ and let $\phi:\Gamma \rightarrow \Gamma$ be the reflection on $\Gamma$ with respect to the plane $M$. Since $M$ is not perpendicular to $L$, the relation $\phi(L\cap K)\not= L\cap K$ holds. Thus $\phi(L\cap K)$ is not contained in $\{(x_1,x_2,\ldots,x_n)\in \Rt : x_n=0\}$ which contradicts Lemma \ref{marina}). This shows that $M$ must be perpendicular to $L$. Varying $\Gamma$ among the pencil of hyperplanes containing $L$, it follows that the union of all such $(n-2)$ planes of symmetry $M$ is the hyperplane $\Pi(L)$ perpendicular to $L$ and passing through $p$. Consequently, $\Pi(L)$ is a hyperplane of symmetry of $K$ for any line $L$ perpendicular to $L(o,p)$ and passing through $p$. Thus, $K$ is a body of revolution with axis $L(o,p)$, completing the proof of Theorem \ref{tania}.

\end{proof}

As a corollary, we identify all the Larman and revolution points of  ellipsoids and strictly convex, origin symmetric bodies of revolution.

\begin{corollary}\label{ejemplos}

(i)  Let $E \subset \mathbb{R}^n$ be an ellipsoid which is not a body of revolution, and let $o$ be its center. Then $o$ is a revolution point of $E$, and any interior point $p \in E$ such that  $p\neq o$ is a Larman point, but is not a revolution point of $E$. 

(ii) Let $K\subset \mathbb{R}^n$ be an origin symmetric, strictly convex body of revolution which is not a ball. Then any  interior point $p \in K$ that is on the axis of revolution is a revolution point, while any  $p \in K$ not on the axis of revolution is a Larman point which is not a revolution point of $K$. 
\end{corollary}

\begin{proof}
    
Any hyperplane section of an ellipsoid in $\mathbb{R}^n$ is an $(n-1)$ dimensional ellipsoid, and any hyperplane section of a body of revolution is an $(n-1)$ dimensional body of revolution (see Remark \ref{boca}). Hence, any hyperplane section of an ellipsoid or a body of revolution has an $(n-2)$ plane of symmetry, which implies that any interior point is a Larman point.

(i) By Remark \ref{lineo}, the center of the ellipsoid $E$ is a revolution point. Let us assume that an interior point $p \neq o$ is a revolution point.  The ellipsoid $E$ has $n$ axes of symmetry, none of which are axes of revolution. First, consider the case in which $p$ is not on an axis of symmetry of $E$. Let $\Pi$ be a hyperplane containing the line $L(o, p)$. On the one hand, since $p$ is a revolution point of $E$, there exists an $(n-2)$ plane  of symmetry $W$ of $\Pi \cap E$ passing through $p$. On the other hand, by Remark \ref{lineo},  $W$ must also pass through $o$. Thus 
$L(o, p) \subset W$. Since this is true for any $\Pi$ containing $L(o,p)$, 
it follows that $L(o,p)$ is an axis of symmetry of $E$. This contradicts the choice of $p$. 

Now we assume that $p$ is on an axis of symmetry $I$ of $E$. Since $E$ is strictly convex, by Lemma \ref{marina}, the equality $t(x)=H\cap E$ holds, where $H$ is the plane perpendicular to $I$ and passing through $x$. By an analogous argument as the one used in the proof of Theorem \ref{tania}, it follows that $E$ is a body of revolution with axis $I$. This contradicts the assumption that $E$ is not a body of revolution.

(ii) Let $K\subset \mathbb{R}^n$ be a strictly convex, origin symmetric body of revolution with axis of revolution $L$, and assume that $K$ is not a ball. Assume that $p\in L$. Since $L$ must be contained in the $(n-2)$ plane of symmetry of any hyperplane section $K\cap \Pi$ where $\Pi$ contains $L$, then $p$ is on the $(n-2)$ plane of symmetry. If $\Pi$ does not contain $L$, the  $(n-2)$ plane of symmetry must contain the line of symmetry of the section $\Pi \cap K$, which is the line passing through $p$ and the point $\Pi \cap L$ (see Figure \ref{rev} and the proof of Remark \ref{boca}). In both cases, it follows that $p$ is a revolution point. 
On the other hand, if we assume that $p \notin L$ is a revolution point, it follows from Theorem \ref{tania} that $K$ is a body of revolution with axis the line $L(o,x)$, i.e., $K$ is a body of revolution with respect two different axis of revolution. Thus $K$ is a ball, contradicting our hypothesis.
\end{proof} 


\section{Proof of Theorems  \ref{sisi} and \ref{seven}}


 We will first outline the main ideas of the proof of Theorem \ref{sisi}, which requires several auxiliary Lemmas.

Our first Lemma  \ref{liz} shows that, under the hypotheses of Theorem \ref{sisi}, if $o=p$ then any line passing through $o$ and contained in the plane $\Omega$ is an axis of symmetry of $K$. Then, by Theorem \ref{aux}, $K$ is a body of revolution. This concludes the proof of case (i) of Theorem \ref{sisi}. 

In case (ii), when $o \neq p$, Lemma \ref{liz} yields that the line $\Lambda:=L(o,p)$ is axis of symmetry of $K$. By Remark \ref{lineo}, $\Lambda$ is the unique axis of symmetry of $K$ that contains the point $p$.  

Next, we consider the line $L(\theta)$ contained in the plane $\Omega$, passing through $p$ and making an angle $\theta$ with $\Lambda$, and the family of planes in $\mathbb{R}^3$ that contain the line $L(\theta)$. In each of these planes, the corresponding section of $K$ has either one or two lines of symmetry: the first line is given by the Larman point hypothesis, and the second line (which may coincide with the first one) is obtained by reflection of this line on the axis of symmetry $\Lambda$. Lemma \ref{kenny} shows that these two lines must be distinct for almost every plane in the family. 

Having now two different lines of symmetry on almost all plane sections, Lemmas \ref{simone}, \ref{caminante} and \ref{cometa} use topological arguments to conclude that the corresponding sections of $K$ by planes in the family containing $L(\theta)$ must be discs. Finally, Lemmas \ref{astrud} and \ref{sph} allow us to conclude that $K$ is a ball.

\begin{lemma}\label{liz}
Let $K \subset \Rt$ be a centrally symmetric strictly convex body with centre at $o$, let $L$ be a line and let $p$ be a Larman point of $K$. Suppose that $o\notin L$, $p\in \Omega \backslash L$ and, for all planes $\Pi$ through $p$, one line of symmetry of $\Pi \cap K$ passes through $\Pi \cap L$. (In the case where $\Pi$ is parallel to $L$, then the line of symmetry of $\Pi \cap K$ is assumed to be parallel to $L$). Then the line $\Lambda$ is an axis of symmetry of $K$. 

Furthermore, if $o=p$ (that is, if $K$ is centrally symmetric with respect to the Larman point), then $K$ is a body of revolution with an axis perpendicular to the plane $\Omega$ and passing through $o$.
\end{lemma}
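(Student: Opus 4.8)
My plan is to isolate a single core observation and then invoke it once for the axis-of-symmetry claim and repeatedly for the body-of-revolution claim. The core observation I would establish is the following: \emph{if $N$ is a line through $o$ that contains $p$ and whose intersection with $L$ is a single point $q\neq o$ (or which is parallel to $L$), then $N$ is an axis of symmetry of $K$.} To prove it I would take an arbitrary plane $\Pi\supset N$ with $\Pi\neq\Omega$. Since $N=L(o,q)\subset\Omega$ and $\Pi\neq\Omega$, the two planes meet in a line containing $N$, so $\Pi\cap\Omega=N$, and consequently $\Pi\cap L=N\cap L=\{q\}$. Because $p\in N\subset\Pi$ and $p$ is a Larman point, the section $\Pi\cap K$ has a line of symmetry, and by hypothesis I may choose one, say $M$, passing through $\Pi\cap L=q$. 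On the other hand $o\in N\subset\Pi$, so $\Pi\cap K$ is centrally symmetric with centre $o$; as $o$ is the centroid of $\Pi\cap K$ it is fixed by every symmetry of the section, in particular by the reflection in $M$, whence $o\in M$. Since $q\neq o$, the two points $o,q$ determine $M$ uniquely, so $M=L(o,q)=N$. Thus $N$ is a line of symmetry of $\Pi\cap K$ for every plane $\Pi\supset N$ with $\Pi\neq\Omega$.

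These planes sweep out $\mathbb{R}^3\setminus\Omega$, so the half-turn $R_N$ about $N$ leaves $K\setminus\Omega$ invariant; since $K=\overline{K\setminus\Omega}$ and $R_N$ is continuous, $R_N(K)=K$, that is, $N$ is an axis of symmetry of $K$. When instead $N\parallel L$, the same conclusion follows from the parenthetical hypothesis: every $\Pi\supset N$ other than $\Omega$ is parallel to $L$, so its chosen line of symmetry $M$ is parallel to $L$, hence parallel to $N$; combined with $o\in M$ this again forces $M=N$. This completes the core step.

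With the core step available, the first assertion is immediate. Assuming $p\neq o$, I apply it to $N=\Lambda=L(o,p)$: here $p\in\Lambda$ by definition, and $\Lambda$ either meets $L$ in a single point $q\neq o$ (because $o\notin L$) or is parallel to $L$. In either case the core step shows that $\Lambda$ is an axis of symmetry of $K$.

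For the ``furthermore'', suppose $o=p$. Then for every $q\in L$ the line $L(o,q)$ automatically contains $p=o$, so the core step applies to each $N=L(o,q)$ and shows that every such line is an axis of symmetry of $K$. As $q$ ranges over $L$, these lines exhaust all lines through $o$ in the plane $\Omega$ except the one through $o$ parallel to $L$; that remaining line is a limit of the others, hence is an axis of symmetry as well by Lemma \ref{contento}. Therefore every line through $o$ lying in $\Omega$ is an axis of symmetry of $K$, and Theorem \ref{aux}, applied with $H=\Omega$ and the point $o$, yields that $\Omega$ is a plane of symmetry and that $K$ is a body of revolution whose axis is the line orthogonal to $\Omega$ through $o$. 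The only genuinely delicate point throughout is forcing the chosen line of symmetry to pass through the centre $o$, which is exactly where the central symmetry of $K$ is used; once that is secured, passage through $q$ pins the line down completely, and the handling of the exceptional plane $\Omega$ and of the direction parallel to $L$ is routine limiting.
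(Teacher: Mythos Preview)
Your proposal is correct and follows essentially the same route as the paper: for each plane $\Pi$ through the candidate line $N$ one uses that the hypothesised line of symmetry passes through $\Pi\cap L=\{q\}$ and, because $o\in\Pi$, also through the centre $o$, forcing it to equal $N=L(o,q)$; then Theorem~\ref{aux} is invoked in the case $o=p$. Your write-up is in fact slightly more careful than the paper's, since you explicitly exclude the exceptional plane $\Pi=\Omega$ (where $\Pi\cap L$ is the whole line $L$, not the single point $q$) and recover it by closure, and you spell out the parallel case; the appeal to Lemma~\ref{contento} for the line through $o$ parallel to $L$ is harmless but unnecessary, as your core step already covers that line directly.
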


\begin{proof}
First we consider the case $o\not= p$. Since $p\in \Omega$, then either $\Lambda \cap L \not= \emptyset$ or $\Lambda$ and $L$ are parallel. Let us assume first that $\Lambda \cap L \not= \emptyset$ and we denote by $q$ the intersection $\Lambda \cap L \not= \emptyset$. We denote by $\cal{F}$ the bundle of planes containing $\Lambda$, and we will  show that $\Lambda$ is an axis of symmetry of $K$, by proving that, for every plane $\Pi\in \cal{F}$, the section $\Pi \cap K$ has $\Lambda$ as line of symmetry. Let $\Pi \in \cal{F}$. Since $p$ is a Larman point of $K$, there is at least one line of symmetry of $\Pi \cap K$ and, by hypothesis, one of these lines (say $W$) passes through $\Pi \cap L$. But the only point of intersection of $\Pi$ and $L$ is $q$. 
On the other hand, since $\Pi \cap K$ is centrally symmetric, $W$ must pass through $o$. It follows that $W=L(o,q)=\Lambda$.
The case where $L$ and $\Lambda$ are parallel can be considered analogously. 

In the case where $o=p$, every line $V\subset \Omega$ passing through $o$ is an axis of symmetry of $K$. By Theorem \ref{aux}, we conclude that $K$ is a body of revolution with an axis perpendicular to $\Omega$ and passing through $o$.\end{proof}

\noindent \textit{Proof of Theorem \ref{sisi}.} If $o=p$, by the second part of Lemma \ref{liz}, $K$ is a body of revolution and we are done. On the other hand, if $o\not= p$, by the first part of Lemma \ref{liz}, the line $\Lambda$ is an axis of symmetry of $K$. Furthermore, since $K$ has centre at $o$, every axis of symmetry of $K$ must pass through $o$, and it follows that $\Lambda$ is the unique axis of symmetry of $K$ containing $p$. We may have that $\Lambda$ intersects $L$ or that they are parallel.

We take a system of coordinates $(x_1,x_2,x_3)$ in $\Rt$ such that $p$ is the origin, 
$\Omega$ is given by the equation $x_3=0$ and $\Lambda$ corresponds to the axis of the first coordinate $x_1$. For each $\theta \in (-\pi/2,\pi/2]$, we denote by $L(\theta)$ the line making an angle $\theta$ with the positive axis $x_1$, by $\Omega(\theta,\phi)$ the plane containing $L(\theta)$ and making a positive angle  $\phi$ with the plane $x_3=0$, and by $K(\theta,\phi )$ the section $\Omega (\theta,\phi) \cap K$. We denote by $M$ the line obtained by reflecting $L$ with respect to $\Lambda$, {\it i. e.}, $M=R_{\Lambda}(L)$,  which is also contained in $\Omega$, (see Figure \ref{kari}).

Since $L$ and $\Lambda$ are not perpendicular, we have that $L\not=M$. Denote by  $q(\theta)$ and $m(\theta)$ the intersections of $L(\theta)$ with $L$ and with $M$, respectively.   For $\theta \not= \pi /2$, we have that
\begin{eqnarray}\label{oscar}
R_{\Lambda}(m(\theta)) \not= q(\theta).
\end{eqnarray} 
Let $D(\theta,\phi)$ be the line of symmetry of $K(\theta,\phi)$ passing through $q(\theta)$,  given by the hypothesis of Theorem \ref{sisi}. Also by the hypothesis, the plane $\Omega(-\theta,-\phi)= R_{\Lambda}(\Omega(\theta,\phi))$ has a line of symmetry $D((-\theta,-\phi)$ passing through $R_{\Lambda}(m(\theta)) \in L$.

\begin{figure}[H]
\centering
\includegraphics [width=6in] {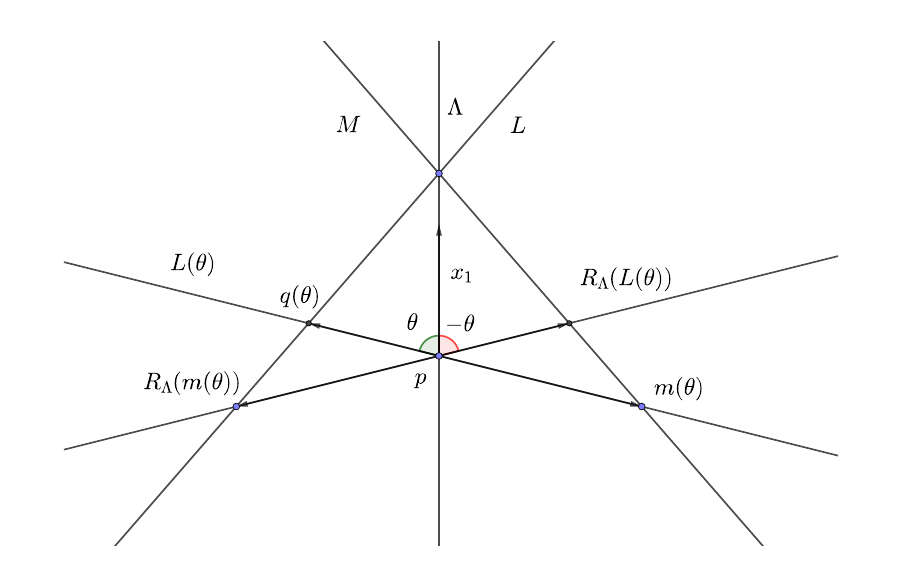} 
\caption{Case 1 of Theorem \ref{sisi}.}
\label{kari}
\end{figure}

Since $\Lambda$ is an axis of symmetry of $K$, we have that 
\[
R_{\Lambda}(K(-\theta, -\phi))=K(\theta,\phi).
\]
Thus, the section  $K(\theta,\phi)$ has two (possibly equal) lines of symmetry, namely $D(\theta,\phi)$ through $q(\theta)$ and 
$R_{\Lambda}(D(-\theta, -\phi))$ through $m(\theta)$. We have the following two possibilities:
\begin{itemize}
\item [(I)]   $D(\theta,\phi) =R_{\Lambda}(D(-\theta, -\phi))=L(\theta)$ or
\item [(II)]  $D(\theta,\phi) \not=R_{\Lambda}(D(-\theta, -\phi))$.
\end{itemize}
Suppose that there is a fixed $\theta_0 \in (-\pi/2,\pi/2], \theta_0\not=0$, such that (I) holds for $\theta_0$ and all $\phi \in (-\pi/2,\pi/2]$. Then $ L(\theta_0)$ must be  an axis of symmetry of $K$. But $L(\theta)$ contains the point  $p$ and, as we observed above, $\Lambda$ is the unique axis of symmetry of $K$ containing $p$.  
Therefore this situation is impossible.
In fact, we can weaken the hypothesis that (I) holds for a single $\theta_0$ and every $\phi$, as the next Lemma shows.

\begin{lemma}\label{kenny}
It is impossible for condition \textrm{(I)} to hold for a fixed  $\theta_0 \in (-\pi/2,\pi/2], \theta_0\not=0$,  and $\phi\in [\phi_1,\phi_2]$, $-\pi/2<\phi_1<\phi_2<\pi/2$. 
\end{lemma}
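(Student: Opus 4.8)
The plan is to reinterpret condition (I) as the statement that a single global isometry restricts to a symmetry of $K$ on an open wedge, and then to exploit the central symmetry together with strict convexity to force a contradiction. Assume for contradiction that (I) holds for the fixed $\theta_0\neq 0$ and every $\phi\in[\phi_1,\phi_2]$. Let $R:=R_{L(\theta_0)}$ be the half-turn of $\Rt$ about the line $L(\theta_0)$. Each plane $\Omega(\theta_0,\phi)$ contains $L(\theta_0)$, so $R$ maps it onto itself and acts on it as the reflection in $L(\theta_0)$; thus (I) is exactly the statement $R(K(\theta_0,\phi))=K(\theta_0,\phi)$ for all $\phi\in[\phi_1,\phi_2]$. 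Let $W$ be the open solid wedge swept out by the planes $\Omega(\theta_0,\phi)$, $\phi\in(\phi_1,\phi_2)$, with the edge $L(\theta_0)$ removed. Taking the union of the sectional identities and using $R(W)=W$, we obtain $R(K)\cap W=K\cap W$, i.e. $K$ and $R(K)$ coincide on the open set $W$. Note $W$ meets $\inte K$, since its edge $L(\theta_0)$ passes through the interior point $p$.

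Next I would bring in the central symmetry. Write $\sigma$ for the reflection $x\mapsto 2o-x$, so $\sigma(K)=K$. Conjugating $R$ by $\sigma$ gives an isometry $T:=\sigma R\sigma$, which is again a half-turn, about the line $L':=\sigma(L(\theta_0))$. Since $o\notin L(\theta_0)$ — this is where $\theta_0\neq 0$ and $o\neq p$ are used — the line $L'$ is parallel to $L(\theta_0)$ but distinct from it. Applying $\sigma$ to $R(K)\cap W=K\cap W$ and using $\sigma(K)=K$ yields $T(K)\cap W'=K\cap W'$, where $W'=\sigma(W)$ is the open wedge with edge $L'$.

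The key step is to compose the two local symmetries. As $R$ and $T$ are half-turns about the distinct parallel lines $L(\theta_0)$ and $L'$, their composition $\tau:=T\circ R$ is a translation by a vector $\vec t$ perpendicular to the common axis direction, and a direct computation gives $\vec t\neq 0$ precisely because $\theta_0\neq 0$ and $o\neq p$. Combining $R(K)\cap W=K\cap W$ with $T(K)\cap W'=K\cap W'$, one checks that on the open set $U:=T(W)\cap W'$ the bodies $\tau(K)=K+\vec t$ and $K$ coincide; equivalently, $z\in K\iff z-\vec t\in K$ for every $z\in U$. Finally, picking a point $x\in\partial K\cap U$, the local identity $K=K+\vec t$ near $x$ shows that $\partial K$ near $x$ is the $\vec t$-translate of $\partial K$ near $x-\vec t$; in particular $x-\vec t\in\partial K$ and the outer unit normals of $K$ at the two distinct points $x$ and $x-\vec t$ are equal. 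This contradicts strict convexity, under which each outer normal direction is attained at a unique boundary point, and the lemma follows.

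The main obstacle is the geometric bookkeeping needed in the third paragraph: verifying that the two wedge-localized symmetries overlap on an open region $U$ that actually reaches $\partial K$, so that the translation relation $K=K+\vec t$ can be evaluated at a genuine boundary point. Since $W'$ is unbounded and meets $\inte K$ near the interior point $2o-p$, the expected argument is to exhibit a connected piece of $U$ running from $\inte K$ out to the exterior of the bounded body $K$, forcing it to cross $\partial K$; making this overlap precise is the only delicate part, as the remaining steps — rewriting (I) as a local half-turn symmetry, producing the second parallel axis from central symmetry, and the final normal/strict-convexity contradiction — are routine once the overlap is secured.
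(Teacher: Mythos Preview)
Your strategy is genuinely different from the paper's and considerably more elaborate. The paper's argument is a two--line application of Lemma~\ref{marina}: since $\Lambda$ is an axis of symmetry, $t(p)=\partial K\cap\{x_1=0\}$; but if $L(\theta_0)$ were a line of symmetry of $K(\theta_0,\phi)$ for all $\phi\in[\phi_1,\phi_2]$, then the chords of $K$ perpendicular to $L(\theta_0)$ at $p$ in these planes would all have midpoint $p$, forcing arcs of $t(p)$ to lie in the plane $\Sigma_0\perp L(\theta_0)$ through $p$, which differs from $\{x_1=0\}$ because $\theta_0\neq 0$. That is the whole proof.

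Your route---manufacture a second half--turn $T=\sigma R\sigma$ from central symmetry, compose to get a nontrivial translation $\tau$, and contradict strict convexity---is correct in its algebra, but the gap you flag is more serious than ``bookkeeping''. Pass to the plane $u^\perp$ orthogonal to $L(\theta_0)$. There $W$ is a \emph{double} sector (a bowtie), not a convex sector, and $W'=\sigma(W)$, $T(W)$ are translates of this same bowtie by $\vec d$ and $2\vec d$, where $\vec d$ is the component of $2o$ orthogonal to $u$; in particular $\vec d$ lies along the $\phi=0$ direction. Hence $U=W'\cap T(W)$ is controlled by whether the bowtie contains the direction of $\vec d$. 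When $0\notin(\phi_1,\phi_2)$---which the hypotheses allow---the point $2o$ is \emph{not} in $T(W)$, so your suggested connected arc from $\inte K$ outward need not exist inside $U$. In that regime the two parallel--sector pieces of $U$ may sit entirely outside the (bounded) projection of $K$ onto $u^\perp$, so $U\cap K=\emptyset$ is not ruled out by anything you have written, and the final normal argument never gets off the ground.

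In short: the first two paragraphs are fine, but the overlap $U\cap\partial K\neq\emptyset$ is not a routine detail---it can fail for thin wedges with $0\notin(\phi_1,\phi_2)$ and large $|o-p|$---and without it the translation argument collapses. The paper sidesteps all of this by using the planarity of $t(p)$ from Lemma~\ref{marina}; I would recommend you adopt that line, since it already encodes the interaction between the global axis $\Lambda$ and the putative local axis $L(\theta_0)$ without any wedge--overlap analysis.
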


\begin{proof}
 Assume that condition (I) holds, {\it i.e} the line $L(\theta_0)$ is a line of symmetry of $K(\theta_0,\phi)$ for $\phi \in [\phi_1,\phi_2]$. Observe that the line $L(\theta_0)$ is not an axis of symmetry of $K$ (because $\Lambda$ is the only axis of symmetry of $K$ passing through $p$).  
Since $\Lambda$ is an axis of symmetry of $K$, the section $\{x_1=0\}\cap K$ is centrally symmetric with centre at $p$. In addition, by Lemma \ref{marina} we have that $t(p)= \partial K \cap \{x_1=0\}$. On the other hand, since $t(p)$ is a curve and, 
since $L(\theta_0)$ is a line of symmetry for $K(\theta_0,\phi)$, $\phi \in [\phi_1,\phi_2]$, 
there exists a plane $\Sigma_0$ perpendicular to $L(\theta_0)$ and containing $p$, such that 
for every $\phi\in [\phi_1,\phi_2]$ the chords $(\Omega(\theta_0,\phi)\cap \Sigma_0)\cap K$ belong to $C(p)$, \textit{i.e.}, there exist two arcs $\rho_0, \tau_0$ of $t(p)$ such that $\rho_0, \tau_0\subset \Sigma$ and $\tau_0=-\rho_0$ (See Figure \ref{mami}).  
Since $\theta_0\not=0$,  it follows that $\Sigma_0 \not= \{x_1=0\}$. Thus the arcs $\rho_0$ and $\tau_0$ would not be contained in $\{x_1=0\}$ contradicting Lemma \ref{marina}.

\begin{figure}[H]
\centering
\includegraphics [width=6in] {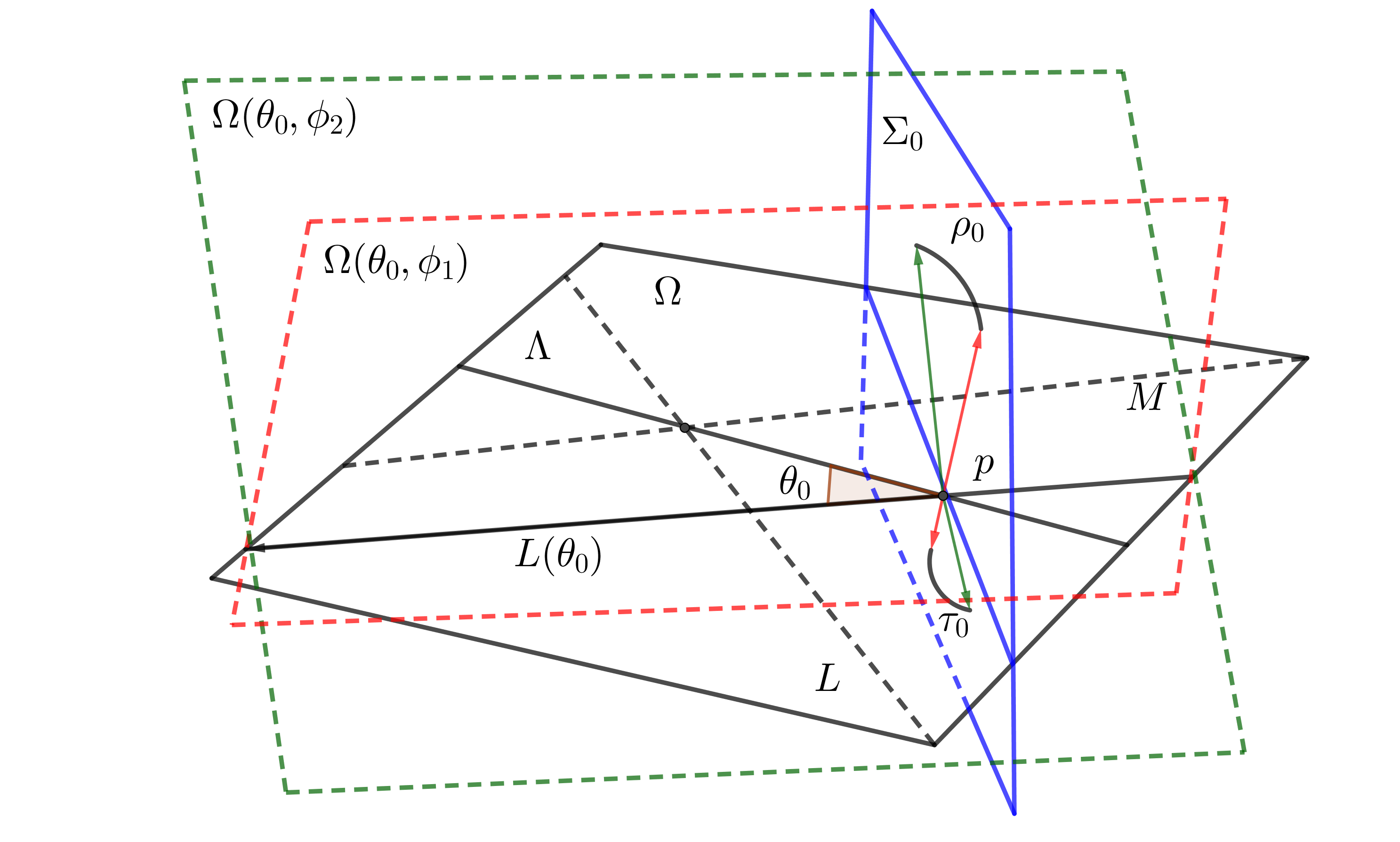} 
\caption{What if (I) holds for some $\phi$ and (II) for other $\phi$?}
\label{mami}
\end{figure} 
\end{proof}

Now we suppose that the case (II) is satisfied for all $\theta  \in (-\pi/2,\pi/2)$ and almost every $\phi$ in $(-\pi/2,\pi/2)$,  $\theta, \phi \not= 0$, 
\textit{i.e.}, the section  $K(\theta,\phi)$ has two different lines of symmetry, namely, $D(\theta,\phi)$ through $q(\theta)$ and $R_{\Lambda}(D(-\theta, -\phi))$ through $m(\theta)$.
We denote by  $E_m(\theta,\phi)$ the line $R_{\Lambda}(D(-\theta, -\phi))$, and by $z(\theta,\phi)$ the point of intersection of the lines of symmetry $D(\theta,\phi)$ and $E_m(\theta,\phi)$. For $\theta$ fixed, we consider the following two functions: $f_{\theta}:[0,\pi] \rightarrow [0,a]$, $a\in \mathbb{R^+}$, is defined as the distance from the point $z(\theta,\phi)$ 
to the line $L(\theta)$, and $g_{\theta}:[0,\pi] \rightarrow (0,\pi]$ is defined as the angle between the two axes of symmetry. 
 It follows directly from Lemma \ref{chinita} that, for $\theta$ fixed, \textit{the functions $f_{\theta}$ and $g_{\theta}$ are continuous as functions of $\phi$}. 
By the compactness of $[0,\pi]$ there exist $\alpha, \beta, \gamma,\delta \in \mathbb{R}$ such that $f_{\theta}(\alpha)\leq f_{\theta}(\phi)\leq f_{\theta}(\beta)$ and $g_{\theta}(\gamma)\leq g_{\theta}(\phi)\leq g_{\theta}(\delta)$. 

\begin{lemma}\label{simone}
For each $\theta \in[0,\pi]$ there exists $\phi_{\theta}\in [0,\pi]$ such that $K(\theta,\phi_{\theta})$ has $L(\theta)$ as line of symmetry.  
\end{lemma}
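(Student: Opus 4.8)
The plan is to prove the equivalent statement that, for each $\theta$, the common point $z(\theta,\phi)$ of the two lines of symmetry $D(\theta,\phi)$ and $E_m(\theta,\phi)$ lies on $L(\theta)$ for some $\phi$. The cases $\theta=0$ and $\theta=\pi$ are immediate: there $L(\theta)=\Lambda$, which is an axis of symmetry of $K$ by Lemma~\ref{liz}, so $\Lambda$ is a line of symmetry of every section $K(\theta,\phi)$ containing it. Fix therefore $\theta\in(0,\pi)$. If condition (I) holds for some $\phi$ we are done, since (I) says precisely that $L(\theta)$ is a line of symmetry of $K(\theta,\phi)$; so I assume (II) holds for every $\phi\in(0,\pi)$. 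Then $D(\theta,\phi)\neq E_m(\theta,\phi)$; being distinct lines of symmetry of a planar convex body they cannot be parallel, so their intersection $z(\theta,\phi)$ is well defined, and by Lemma~\ref{chinita} together with its corollary~\eqref{grecia} the map $\phi\mapsto z(\theta,\phi)$ is continuous on $[0,\pi]$.

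First I would locate the endpoints. At $\phi=0$ and $\phi=\pi$ the plane $\Omega(\theta,\phi)$ equals $\Omega$, so the section is the centrally symmetric body $\Omega\cap K$. Applying Lemma~\ref{chinita} to a sequence $\phi_k\to 0$ (and again to $\phi_k\to\pi$) shows that $\Omega\cap K$ has a line of symmetry through $q(\theta)$ and one through $m(\theta)$; since $\Omega\cap K$ is symmetric about $o$, both of these pass through $o$, so they are exactly $L$ and $M$. Hence $z(\theta,0)=z(\theta,\pi)=L\cap M=o$, and in particular $z$ returns to the same point $o\notin L(\theta)$ at the two endpoints.

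The key step is an intermediate value argument in which the sign comes not from the position of $z$ but from the orientation of the section plane. Let $e_\theta$ be the unit vector in $\Omega$ orthogonal to $L(\theta)$, let $u$ be the unit vector along the $x_3$-axis, and set $n(\phi)=\cos\phi\, e_\theta+\sin\phi\, u$, the continuously varying in-plane unit normal to $L(\theta)$ inside $\Omega(\theta,\phi)$; note that $n(\pi)=-e_\theta=-n(0)$, because as $\phi$ runs from $0$ to $\pi$ the plane makes a half turn about $L(\theta)$. Since $p$ is the origin and lies on $L(\theta)$, the signed distance from $z(\theta,\phi)$ to $L(\theta)$ is $\widetilde f_\theta(\phi)=\langle z(\theta,\phi),n(\phi)\rangle$, a continuous function of $\phi$. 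Using the endpoint computation, $\widetilde f_\theta(0)=\langle o,e_\theta\rangle$ and $\widetilde f_\theta(\pi)=\langle o,-e_\theta\rangle=-\widetilde f_\theta(0)$, and this common value is nonzero because $o\notin L(\theta)$ for $\theta\in(0,\pi)$. Thus $\widetilde f_\theta$ takes opposite signs at the two endpoints, and by the intermediate value theorem there is $\phi_\theta\in(0,\pi)$ with $\widetilde f_\theta(\phi_\theta)=0$, i.e.\ $z(\theta,\phi_\theta)\in L(\theta)$.

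It remains to see that $z(\theta,\phi_\theta)\in L(\theta)$ forces $L(\theta)$ to be a line of symmetry. The points $q(\theta)$, $m(\theta)$, $z(\theta,\phi_\theta)$ all lie on $L(\theta)$; since $D(\theta,\phi_\theta)$ passes through $q(\theta)$ and $z(\theta,\phi_\theta)$, as soon as these two points are distinct we get $D(\theta,\phi_\theta)=L(\theta)$, and otherwise the same reasoning applied to $E_m(\theta,\phi_\theta)$ (through $m(\theta)$) gives $E_m(\theta,\phi_\theta)=L(\theta)$; the only way both degeneracies occur simultaneously is $q(\theta)=m(\theta)=o$, which is excluded for $\theta\in(0,\pi)$. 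In every case $L(\theta)$ is a line of symmetry of $K(\theta,\phi_\theta)$, as claimed. The main obstacle, and the crux of the argument, is precisely the observation in the key step: although the two endpoints $\phi=0,\pi$ give the same section and return $z$ to the same point $o$, the in-plane normal $n(\phi)$ reverses sign over the half turn, so the \emph{signed} distance $\widetilde f_\theta$ changes sign and the intermediate value theorem applies. A naive comparison of the endpoints, or of the unsigned distance, would give no information.
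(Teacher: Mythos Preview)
Your proof is correct and is the elementary unwinding of the paper's argument: the paper invokes the nonexistence of a nowhere-zero section of the tautological line bundle $\gamma^1\to\mathbb{R}P^1$, and your signed-distance function $\widetilde f_\theta$ expresses exactly the same obstruction via the intermediate value theorem, exploiting that the in-plane normal $n(\phi)$ reverses sign over the half turn while $z(\theta,0)=z(\theta,\pi)$. You also spell out the closing step (that $z(\theta,\phi_\theta)\in L(\theta)$ forces one of $D$, $E_m$ to equal $L(\theta)$), which the paper leaves implicit.

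One small slip in the endpoint computation: the limiting lines of symmetry of $\Omega\cap K$ as $\phi\to 0,\pi$ are the lines joining $o$ to $q(\theta)$ and $o$ to $m(\theta)$, not $L$ and $M$ themselves---neither $L$ nor $M$ contains $o$, since $o\notin L$ by hypothesis, and correspondingly $L\cap M$ (when it exists) lies on $\Lambda$ but is not $o$. Since both limiting lines do pass through $o$, your conclusion $z(\theta,0)=z(\theta,\pi)=o$ is unaffected.
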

\begin{proof}
We will show that $f_{\theta}(\alpha)=0$. Denote by $u$  the unit vector $(\cos \theta,\sin \theta,0)$, which is the direction vector of the line $L(\theta)$. Let $\pi: \Rt \rightarrow u^\perp$ be the orthogonal projection corresponding to $u$. We define a map $\xi: \mathbb{S}^2\cap u^{\perp} \rightarrow \mathbb{R}^2$ as follows: for $\phi \in [0,\pi]$ we define $v=(\cos \phi,\sin \phi)\in S_u:=$$\mathbb{S}^2\cap u^{\perp}$ and $\xi(v)=\pi(z(\theta,\phi))$. This is a point on the line of intersection of the planes  $u^\perp$ and $\Omega(\theta,\phi)$. Naturally, we have $\xi (-v)=\xi(v)$ , where $-v=(\cos( \phi +\pi), \sin (\phi +\pi)  )$, because $-v$ and $v$ define the same plane $\Omega(\theta,\phi)$. Since $\xi$ is a continuous function (this follows from the continuity of $f_\theta$),   there exist a $\phi_0$ such that for $v_0=(\cos \phi_0,\sin \phi_0)$ we have $\xi(v_0)=0$, otherwise, the standard vector bundle of 1-dimensional sub-spaces of $\mathbb{R}^2$, $\gamma^1: E\rightarrow \mathbb{R}P^n$ would have a non-zero section, which it would be a contradiction (see Proposition 4 and Example 3 in Section 4 of \cite{milnor}). Consequently, $f_\theta(\phi_0)=0$, and $\phi_\theta=\phi_0$ is the number the we were looking for.
\end{proof}

\begin{lemma}\label{caminante}
For all $\theta \in [0,\pi]$ the equality $\phi_{\theta}=\frac{\pi}{2}$ holds.
\end{lemma}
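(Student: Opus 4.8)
The plan is to combine Lemma~\ref{simone} with the explicit description of the locus $t(p)$ coming from Lemma~\ref{marina}. By Lemma~\ref{liz}, the line $\Lambda$ (the $x_1$-axis in our coordinates) is an axis of symmetry of $K$ and $p\in\Lambda$; hence Lemma~\ref{marina} applies and yields $t(p)=\{x_1=0\}\cap\partial K$. Rephrased, this says that \emph{every chord of $K$ whose midpoint is $p$ lies in the plane $\{x_1=0\}$}. Now fix $\theta$ with $\sin\theta\neq 0$, i.e.\ $L(\theta)\neq\Lambda$. By Lemma~\ref{simone}, the line $L(\theta)$ is a line of symmetry of the planar section $K(\theta,\phi_\theta)=\Omega(\theta,\phi_\theta)\cap K$, and since $p$ is an interior Larman point it lies in $\inte K(\theta,\phi_\theta)$.

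The geometric observation I would use is elementary: if $W$ is a line of symmetry of a planar convex body $\Phi$ and $q\in W\cap\inte\Phi$, then the reflection across $W$ fixes $q$ and carries the chord of $\Phi$ through $q$ orthogonal to $W$ onto itself, reversing its orientation; hence $q$ is the midpoint of that chord. I would apply this to $\Phi=K(\theta,\phi_\theta)$, $W=L(\theta)$, and $q=p$. Let $c$ denote the chord of $K(\theta,\phi_\theta)$ through $p$ orthogonal (inside the plane $\Omega(\theta,\phi_\theta)$) to $L(\theta)$. The observation gives that $p$ is the midpoint of $c$, so $c\in C(p)$, and therefore, by the first paragraph, $c\subset\{x_1=0\}$.

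It then remains to read off $\phi_\theta$ from the direction of $c$. With $u=(\cos\theta,\sin\theta,0)$ the direction of $L(\theta)$, the chord $c$ is directed along the unit vector spanning $u^{\perp}\cap\Omega(\theta,\phi_\theta)$; in the notation of Lemma~\ref{simone} this is the vector $v\in S_u$ with $v=\cos\phi_\theta\,(-\sin\theta,\cos\theta,0)+\sin\phi_\theta\,(0,0,1)$. Since $c$ passes through the origin $p$ and lies in $\{x_1=0\}$, the first coordinate of $v$ must vanish, i.e.\ $\sin\theta\cos\phi_\theta=0$. As $\sin\theta\neq 0$, this forces $\cos\phi_\theta=0$, that is $\phi_\theta=\tfrac{\pi}{2}$. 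The two endpoint values $\theta=0,\pi$, for which $L(\theta)=\Lambda$ is already an axis of symmetry and the argument is vacuous, are then recovered by continuity of $\phi_\theta$ in $\theta$ (established through the continuity of $f_\theta$ used in Lemma~\ref{simone}).

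I do not expect a serious obstacle here: the only points requiring care are the midpoint observation (a one-line reflection argument) and checking that $c$ is a genuine, nondegenerate chord, which is guaranteed because $p$ is an \emph{interior} Larman point and hence interior to every section $K(\theta,\phi)$. The real leverage is entirely supplied by Lemma~\ref{marina}, which confines every $p$-bisected chord to the single plane $\{x_1=0\}$; once that is in hand, the identity $\phi_\theta=\tfrac{\pi}{2}$ is forced by a direct coordinate computation.
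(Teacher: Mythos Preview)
Your argument is correct and follows essentially the same route as the paper: both proofs combine Lemma~\ref{simone} with Lemma~\ref{marina} (every chord of $K$ bisected by $p$ must lie in $\{x_1=0\}$) and the reflection symmetry of $K(\theta,\phi_\theta)$ across $L(\theta)$. The paper phrases it by contradiction (reflecting the chord $AB=\Omega(\theta,\phi_\theta)\cap\{x_1=0\}\cap K$ across $L(\theta)$ to obtain a $p$-bisected chord outside $\{x_1=0\}$), whereas you proceed directly by taking the perpendicular chord and reading off the constraint on its direction; the content is the same. One minor quibble: your appeal to ``continuity of $\phi_\theta$ in $\theta$'' for the endpoints is not actually established (only continuity of $f_\theta$ in $\phi$ was proved), but this is harmless since for $\theta=0,\pi$ one has $L(\theta)=\Lambda$, which is a line of symmetry of \emph{every} section $K(0,\phi)$, so $\phi_\theta=\pi/2$ is a valid choice trivially.
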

\begin{proof}
Contrary to the statement of the Lemma, 
suppose that there exists $\theta_0\in[0,\pi]$ such that $\phi_{\theta_0}\not= \frac{\pi}{2}$. Then the inequality
\begin{eqnarray}\label{reik}
[\Omega(\theta_0,\phi_{\theta_0})\cap \{x_1=0\}]\not=[ \{x_1=0\}\cap\{x_2=0\}].
\end{eqnarray}
holds. We denote by $A,B$ the extreme points of the chord $[\Omega(\theta_0,\phi_{\theta_0})\cap \{x_1=0\}]\cap \partial K$ and by $A',B'$ the images of $A,B$ under the reflection in $\Omega(\theta_0,\phi_{\theta_0})$ with respect to the line $L(\theta)$. Notice, on the one hand, the segment $AB$ has midpoint $p$, since $\Lambda$ is an axis of symmetry of $K$, and on the other hand, $A'B'$ has midpoint $p$ by the definition of $\phi_{\theta_0}$. By equation \eqref{reik}, the chord $A'B'$ is not contained in the plane $x_1=0$. However, this contradicts Lemma \ref{marina} since $A'B'$ has midpoint at $p$.
\end{proof}

\begin{lemma}\label{cometa}
For all $\theta\in[0,\pi]$, the section $K(\theta,\phi_{\theta})$ is a circle.
\end{lemma}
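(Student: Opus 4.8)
The plan is to fix $\theta\in[0,\pi]$ and show that the two axes of symmetry of the nearby sections $K(\theta,\phi)$ collapse onto $L(\theta)$ as $\phi\to\pi/2$, so that fact (III) of Section 2 (a limit of planar convex bodies each carrying two axes of symmetry whose angle tends to $0$ is a disc) forces the limiting section $K(\theta,\pi/2)=K(\theta,\phi_\theta)$ to be a disc. Recall from Lemmas \ref{simone} and \ref{caminante} that $K(\theta,\pi/2)$ has $L(\theta)$ as a line of symmetry, i.e. $f_\theta(\pi/2)=0$, and that for almost every $\phi$ (case (II)) the section $K(\theta,\phi)$ carries two distinct lines of symmetry $D(\theta,\phi)$ and $E_m(\theta,\phi)$, passing respectively through the points $q(\theta)=L(\theta)\cap L$ and $m(\theta)=L(\theta)\cap M$. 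These two points are fixed on $L(\theta)$ and, since $L\neq M$ (because $op$ is not perpendicular to $L$), one checks that $q(\theta)\neq m(\theta)$ for all but finitely many values of $\theta$.

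Concretely, I would pick a sequence $\phi_n\to\pi/2$ lying in the full-measure case-(II) set. Since the cutting plane $\Omega(\theta,\phi)$ depends continuously on $\phi$, we have $K(\theta,\phi_n)\to K(\theta,\pi/2)$. By Lemma \ref{chinita} (together with fact (II)), after passing to a subsequence the axes converge, $D(\theta,\phi_n)\to D^{\infty}$ and $E_m(\theta,\phi_n)\to E^{\infty}$, to lines of symmetry of $K(\theta,\pi/2)$ with $q(\theta)\in D^{\infty}$ and $m(\theta)\in E^{\infty}$; moreover their intersection $z(\theta,\phi_n)$ converges to a point $z^{\infty}\in L(\theta)$, because the distance $\mathrm{dist}(z(\theta,\phi_n),L(\theta))=f_\theta(\phi_n)\to f_\theta(\pi/2)=0$. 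If $D^{\infty}=E^{\infty}$, then the angle $g_\theta(\phi_n)\to 0$ and we conclude by fact (III). Otherwise $z^{\infty}=D^{\infty}\cap E^{\infty}$; since $D^{\infty}$ meets $L(\theta)$ in the two points $q(\theta)$ and $z^{\infty}$, while $E^{\infty}$ meets it in $m(\theta)$ and $z^{\infty}$, and $q(\theta)\neq m(\theta)$, the point $z^{\infty}$ can coincide with at most one of $q(\theta),m(\theta)$; hence at least one of $D^{\infty},E^{\infty}$ contains two distinct points of $L(\theta)$ and must equal $L(\theta)$, which again drives $g_\theta(\phi_n)\to 0$ and yields the disc via fact (III).

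I expect the main obstacle to be the residual degenerate configuration where, say, $D^{\infty}=L(\theta)$ but $z^{\infty}=m(\theta)$ and $E^{\infty}\neq L(\theta)$, so that $K(\theta,\pi/2)$ has two \emph{distinct} axes of symmetry $L(\theta)$ and $E^{\infty}$ meeting at $m(\theta)$ at a positive angle $g^{\ast}$; here the pinching fails and fact (III) does not apply directly. By Proposition \ref{escandalo}, if $g^{\ast}$ is not a rational multiple of $\pi$ then $K(\theta,\pi/2)$ is already a circle, while the only remaining possibility $g^{\ast}=2\pi/k$ can occur for at most a sparse set of $\theta$. For such exceptional $\theta_0$ I would finish by continuity: choosing non-exceptional $\theta\to\theta_0$ for which $K(\theta,\pi/2)$ has already been shown to be a circle, fact (II) and the continuity of sections give that the limit $K(\theta_0,\pi/2)$ is a circle as well. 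Thus the delicate technical point is to verify that the coincidence $z^{\infty}\in\{q(\theta),m(\theta)\}$ is genuinely exceptional and hence removable by this continuity argument, the remainder being a direct application of fact (III).
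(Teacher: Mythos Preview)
Your second paragraph overclaims. Once you have shown that (say) $D^{\infty}=L(\theta)$ while $E^{\infty}\neq L(\theta)$, the angle $g_\theta(\phi_n)$ converges to the \emph{positive} angle between $L(\theta)$ and $E^{\infty}$, not to $0$; fact~(III) does not apply. You notice this in the third paragraph, so the substance of your proposal is really the patch you propose there.

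That patch has a genuine gap. You assert that the coincidence $z^{\infty}\in\{q(\theta),m(\theta)\}$ together with $g^{\ast}=2\pi/k$ can occur only for a ``sparse'' set of $\theta$, but you give no argument for this, and nothing in the hypotheses prevents the degenerate configuration from occurring for \emph{every} $\theta$ with a fixed rational angle, e.g.\ $g^{\ast}(\theta)\equiv\pi/2$. In that scenario each $K(\theta,\pi/2)$ would merely have two orthogonal axes of symmetry (which does not force a disc), Proposition~\ref{escandalo} would be useless, and your continuity argument in $\theta$ would have no ``non-exceptional'' values to approximate from. Without an independent reason why $g^{\ast}(\theta)$ must vary, the sparsity claim is unfounded.

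The paper closes this case by a different mechanism: it shows that $z^{\infty}\in\{q(\theta),m(\theta)\}$ is \emph{impossible}. When $L\cap K=\emptyset$ the key point is that $z(\theta,\phi_n)$, being the intersection of two lines of symmetry of $K(\theta,\phi_n)$, lies in $\operatorname{int}K(\theta,\phi_n)\subset\operatorname{int}K$, whereas $q(\theta)\in L$ and $m(\theta)\in M=R_{\Lambda}(L)$ both lie outside $K$; hence $z(\theta,\phi_n)$ cannot converge to either. When $L\cap K\neq\emptyset$ a separate argument is used, tracking the chord families $C(q(\theta))$, $C(m(\theta))$ and applying the Intermediate Value Theorem in $\phi$ to rule out a constant angle. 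You should replace the sparsity/continuity patch by an argument of this type that eliminates the degenerate case outright.
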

\begin{proof}
Let $\{\phi_n\}\subset [0,\pi]$ be a sequence such that $\phi_n \rightarrow \phi_{\theta}$ when $n\rightarrow \infty$. By Lemma \ref{simone}, $f_\theta(\phi_\theta)=0$, {\it i.e.}, as $n \rightarrow \infty$ the point $z(\theta,\phi_n)$ converges to a point $z_0:=z(\theta,\phi_\theta)$ on the line $L(\theta)$. In this situation, we have three possible cases for the  sequence $g_\theta(\phi_n)$: either the angle between the two lines of symmetry converges to $\pi$ (if the point $z_0$ lies between $q(\theta)$ and $m(\theta)$, or it converges to 0 (if the point $z_0$ lies outside of the segment joining $q(\theta)$ and $m(\theta)$), or the angle $g_\theta(\phi_n)$ remains constant as $n \rightarrow \infty$, as is the case if the the point of intersection of the lines lies on an arc of a circle containing both $q(\theta)$ and $m(\theta)$, in which case $z_0$ is equal to $q(\theta)$ or $m(\theta)$.

First we consider the case where $g_\theta(\phi_n)\rightarrow \pi$ when $n\rightarrow \infty$. 
If the lines $D(\theta,\phi_n)$, $E(\theta,\phi_n)$ determine an $m(n)$-star, for some integer $m(n)$ depending of $n$, and for an infinite set of indices $n$, the assumption that $g_\theta(\phi_n)\rightarrow \pi$ implies that  $m(n)\rightarrow \infty$, (since $g_\theta(\phi_n)\rightarrow \pi$ then $[\pi-g_\theta(\phi_n)]\rightarrow 0$ and the number of lines of symmetries of $K(\theta,\phi_n)$ increases with $n$ and, consequently, $m(n)\rightarrow \infty$). Since $K(\theta,\phi_n)\rightarrow K(\theta,\phi_{\theta})$, by Proposition \ref{escandalo} it follows that $K(\theta,\phi_{\theta})$ is a circle. On the other hand, 
if the lines $D(\theta,\phi_n)$, $E(\theta,\phi_n)$ determine an $m(n)$-star, for some integer $m(n)$ depending of $n$ and for a finite set of indices $I=\{n_1,...,n_k\}$,   since $g_\theta(\phi_n)\rightarrow \pi$, then for $n\in \mathbb{N}\backslash I$ the angle $g_{\theta}(\phi_n)$ is irrational thus $K(\theta,\phi_{n})$ is a circle. By virtue of the fact that $K(\theta,\phi_n)\rightarrow K(\theta,\phi_{\theta})$,  it follows from  Proposition \ref{escandalo} that $K(\theta,\phi_{\theta})$ is a circle.
The argument in the case where $g_\theta(\phi_n)\rightarrow 0$ when $n\rightarrow \infty$ is similar.

Next, we will show that it is impossible to have $g_{\theta}(\phi_n)=k$ for some constant number $k$ and for all $n$. Assume, to the contrary, that this is the case. 
We will prove that either $z(\theta,\phi) \rightarrow q(\theta)$ or $z(\theta,\phi) \rightarrow m(\theta)$. Since $f_{\theta}(\phi_n)\rightarrow 0$, there exists $z_0\in L(\theta)$ such that $z(\theta,\phi) \rightarrow z_0$. Let $B_q, B_m$ be two balls with ratio $\epsilon$ and with centres at $q(\theta)$ and $m(\theta)$, respectively. Suppose that $w\in L(\theta)\backslash \{B_q \cup B_m\}$. Let $\{w_i\}\subset \Rt \backslash \{B_q \cup B_m \cup L(\theta)\}$ be a sequence such that $w_i \rightarrow w$ when $i\rightarrow \infty$. Since $\{w_i\}\subset \Rt \backslash \{B_q \cup B_m \cup L(\theta)\}$, the lines $L(q(\theta),w_i)$ and $L(m(\theta),w_i)$ are well defined. Given that $w_i \rightarrow w$, the angle $\alpha_i$ determined by  $L(q(\theta),w_i)$ and $L(m(\theta),w_i)$ tends either to $\pi$ (if $w$ is in the line segment determined by $q(\theta) $ and $m(\theta)$) or 0 (if $w$ is in the complement of line segment determined by $q(\theta)$ and $m(\theta)$). Consequently, $z_0= q(\theta)$ or $z_0= m(\theta)$, otherwise we would contradict the condition $g_{\theta}(\phi_n)=k$ for some constant number $k$ and for all $n$. 

In the case where $L\cap K=\emptyset$, we have that  $q(\theta),m(\theta)\in \Rt\backslash K$. Hence, since $z(\theta,\phi) \rightarrow q(\theta)$ or $z(\theta,\phi) \rightarrow m(\theta)$, there exist an integer $N$ such that $z(\theta,\phi) \in \Rt\backslash K$ for all $n>N$. On the other hand, since $z(\theta,\phi)$ is the intersection point of two lines of symmetry of $K(\theta,\phi_n)$ necessarily such point belongs to  $\inte K(\theta,\phi_n)$. This contradiction shows that the case where $g_{\theta}(\phi_n)$ is a constant sequence is impossible.

Alternatively, assume that  $L\cap K\not=\emptyset$. 
Recall that we denote by $C(x)$ the family of chords of $K$ whose midpoint is $x$, and by $t(x)$ the endpoints of these chords. We claim that $t(p(\theta))$ and $t(m(\theta))$ are curves with center at $p(\theta)$ and $m(\theta)$ respectively. For each $\theta, \phi\in [0,\pi]$ there exist two different lines of symmetry  $D(\theta,\phi)$ and $E(\theta,\phi)$ of $K(\theta,\phi)$, passing through $p(\theta)$ and $m(\theta)$, respectively. By symmetry, the chord $A(\theta,\phi)$ on $\Omega(\theta,\phi)$ passing through $p(\theta)$ and perpendicular to $D(\theta,\phi)$ is in the family  $C(p(\theta))$, and similarly, the chord $B(\theta,\phi)$ passing through $m(\theta)$ and perpendicular to $E(\theta,\phi)$ is in the family $C(m(\theta))$,
(see Figure \ref{ultima}). Hence, the endpoints of $A(\theta,\phi)$ belong to $t(p(\theta))$ and the endpoints of $B(\theta,\phi)$ belong to $t(m(\theta))$. Varying $\theta$ and $\phi$, we conclude that $t(p(\theta))$ and $t(m(\theta))$ are curves with centers at $p(\theta)$ and $m(\theta)$, respectively.

On the other hand, since $A(\theta,\phi) \perp D(\theta,\phi)\textrm{  } \textrm{ and } \textrm{   }B(\theta,\phi) \perp E(\theta,\phi)$, the angle between the lines generated by $A(\theta,\phi)$ and $B(\theta,\phi)$ is equal to $\pi-g(\theta,\phi)$. Notice that, by Lemma \ref{caminante}, $A(\theta,\pi/2)$ and $B(\theta,\pi/2)$ are orthogonal to $\Omega$, \textit{i.e.}, the chords $A(\theta,\pi/2)$ and $B(\theta,\pi/2)$ are parallel. By virtue of the fact that $g(\theta,0)>0$ (since the lines $D(\theta,0)$ $E(\theta,0)$ pass through $o$ and $p(\theta)\not= m(\theta)$), we have that  $g(\theta,\pi/2)=\pi$. Indeed, $A(\theta,\pi/2)$ and $B(\theta,\pi/2)$ are parallel, which means that the angle between the lines generated by $A(\theta,\pi/2)$ and $B(\theta,\pi/2)$ is equal to $0$ and, on the other hand, it is equal to $\pi-g(\theta,\pi/2)$. Consequently, $g(\theta,\pi/2)=\pi$,  and since $g(\theta,\phi)$ is continuous,  it cannot be constant in a neighborhood of $\pi/2$. Hence, also in this case we conclude that $g_{\theta}(\phi_n)$ cannot be  a constant sequence as $n \rightarrow \infty$,  {\it i.e.}, $\phi_n \rightarrow \pi/2$.
\end{proof}

\begin{figure}
\centering
\includegraphics [width=5.5in] {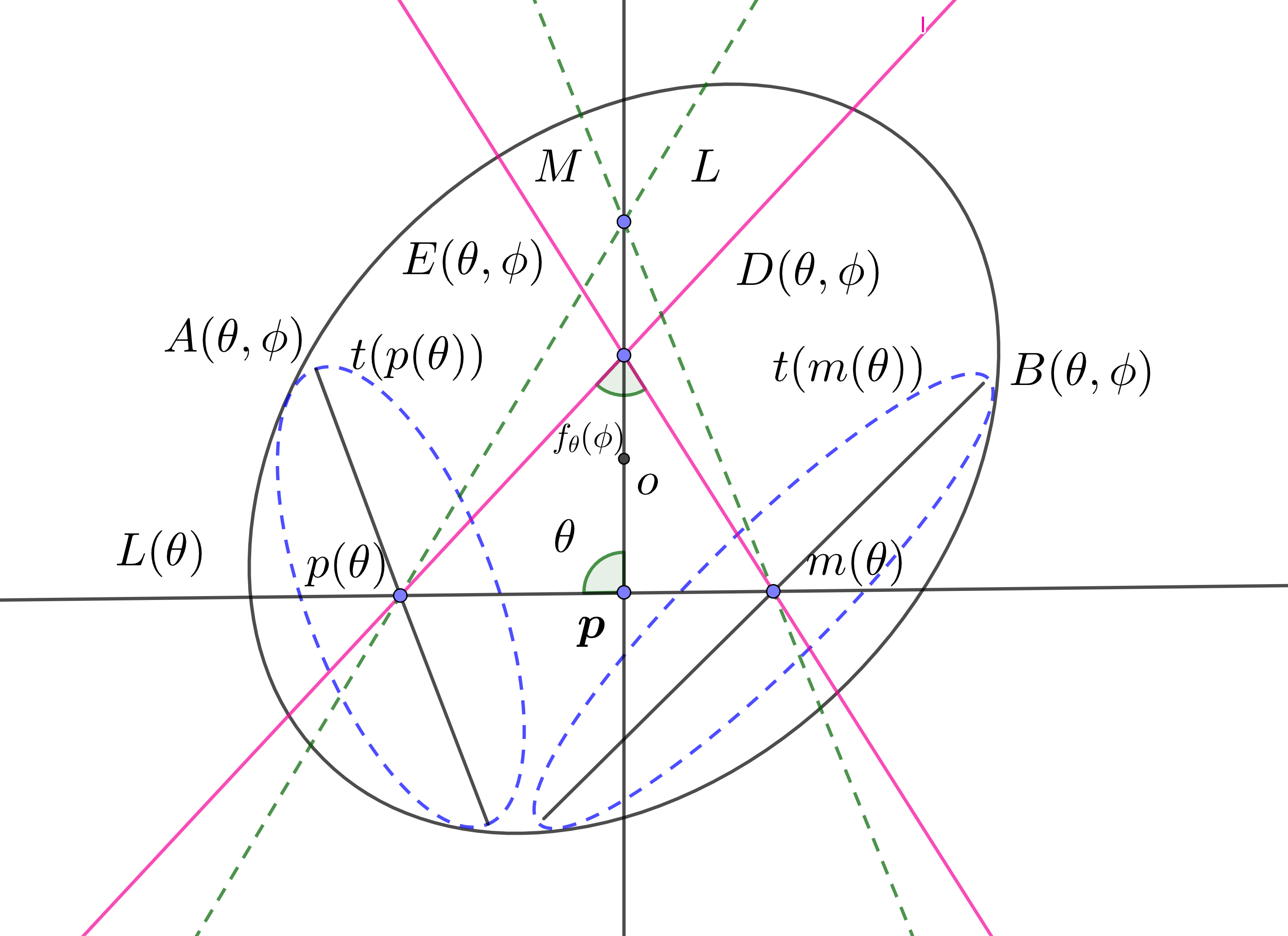} 
\caption{Case: $L\cap K\not=\emptyset$.}
\label{ultima}
\end{figure}

\begin{lemma}\label{astrud}
The section $K_0:=\Omega \cap K$ is a circle.
\end{lemma}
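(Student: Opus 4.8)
The plan is to exploit the circles produced by Lemma \ref{cometa} together with the central symmetry of $K$. Recall the coordinates fixed above: $p$ is the origin, $\Omega=\{x_3=0\}$, $\Lambda$ is the $x_1$-axis, and since $o\in\Lambda$ we may write $o=(c,0,0)$ with $c\neq 0$. For each $\theta$, Lemmas \ref{simone}, \ref{caminante} and \ref{cometa} tell us that the section $K(\theta,\pi/2)$ --- cut by the plane $\Omega(\theta,\pi/2)$ which is perpendicular to $\Omega$ and contains $L(\theta)$ --- is a circle admitting $L(\theta)$ as a line of symmetry. First I would observe that all of these planes contain the line $\ell$ through $p$ orthogonal to $\Omega$ (the $x_3$-axis); consequently every circle $K(\theta,\pi/2)$ passes through the two points $\{P,Q\}=\ell\cap\partial K$. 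Since $\Lambda$ is an axis of symmetry of $K$, the map $R_\Lambda$ interchanges these two points, so they are symmetric with respect to $\Omega$; write $P=(0,0,t_0)$, $Q=(0,0,-t_0)$ with $t_0>0$ (here $p$ is interior).

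Next I would translate the circle property into a metric condition on $K_0$. Because $L(\theta)$ is an axis of symmetry of the circle $K(\theta,\pi/2)$, its centre $c(\theta)$ lies on $L(\theta)\subset\Omega$, say $c(\theta)=\lambda(\theta)u_\theta$ with $u_\theta=(\cos\theta,\sin\theta,0)$; and the chord $L(\theta)\cap K_0$ is a diameter of that circle, whose radius equals $R(\theta)=|c(\theta)-P|=\sqrt{\lambda(\theta)^2+t_0^2}$. Writing $A(\theta),B(\theta)$ for the endpoints of $L(\theta)\cap K_0$, and using that $R(\theta)>|\lambda(\theta)|$, one gets $|pA(\theta)|\,|pB(\theta)|=R(\theta)^2-\lambda(\theta)^2=t_0^2$, independent of $\theta$. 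Thus $p$ has constant power $t_0^2$ with respect to the convex curve $\gamma:=\partial K_0$. Since $K_0$ is centrally symmetric about $o$, applying the reflection in $o$ to an arbitrary chord through $p':=2o-p$ shows (the reflection being an isometry preserving $\gamma$) that $p'$ also has constant power $t_0^2$ with respect to $\gamma$.

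The heart of the matter is then the rigidity statement: \emph{a bounded convex curve possessing two distinct interior points of constant power is a circle.} To prove this I would identify $\Omega$ with $\mathbb{C}$ placing $o$ at the origin, so that $p=-c$ and $p'=c$. Constant power $t_0^2$ at $p$ means exactly that the inversion $\iota_p$ of centre $p$ and radius $t_0$ sends $\gamma$ onto $\sigma_p(\gamma)$, where $\sigma_p$ is the reflection in $p$; equivalently the anti-conformal involution $A:=\sigma_p\circ\iota_p$ preserves $\gamma$, and likewise $B:=\sigma_{p'}\circ\iota_{p'}$ preserves $\gamma$. Hence the orientation-preserving Möbius transformation $g:=B\circ A$ preserves $\gamma$; a direct computation gives
\[
g(z)=\frac{(2c^2+t_0^2)\,z+2c(c^2+t_0^2)}{2c\,z+(2c^2+t_0^2)},
\]
which has normalized trace $2+4c^2/t_0^2>2$ and is therefore hyperbolic, with the two real fixed points $\pm\sqrt{c^2+t_0^2}$. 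These fixed points are precisely the endpoints of the chord $\Lambda\cap K_0$, so they lie on $\gamma$. A bounded convex Jordan curve invariant under a hyperbolic Möbius transformation and passing through both of its fixed points must be a circle: after conjugating $g$ to $z\mapsto\mu z$ (with $\mu>1$), invariance forces the image of $\gamma$ to be radial near each of $0$ and $\infty$, hence a line through them, so $\gamma$ itself is a circle. This is the step I expect to be the main obstacle, since constant power at a single point does \emph{not} characterize circles (small perturbations $\rho(\theta)=t_0e^{a\cos\theta}$ keep the power at the origin constant), and the conclusion genuinely requires the interplay of the two power points with convexity.

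Finally, circularity of $K_0$ forces $\lambda(\theta)=c\cos\theta$ for every $\theta$, i.e. $c(\theta)$ is the foot of the perpendicular from $o$ to $L(\theta)$, and $K_0$ is the disc centred at $o$ of radius $\sqrt{c^2+t_0^2}$, as claimed. I would also remark that the same identity $\lambda(\theta)=c\cos\theta$ places every circle $K(\theta,\pi/2)$ on the sphere of centre $o$ and radius $\sqrt{c^2+t_0^2}$; since the planes $\Omega(\theta,\pi/2)$ sweep out all of $\Rt$ about $\ell$, one has $\partial K=\bigcup_\theta\partial K(\theta,\pi/2)$, so this already shows that $K$ is that ball --- a stronger conclusion which the remainder of the argument records.
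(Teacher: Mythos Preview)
Your route is genuinely different from the paper's, and it is worth contrasting the two. The paper proves Lemma~\ref{astrud} in three lines, \emph{without} invoking Lemmas~\ref{simone}--\ref{cometa} at all: for an arbitrary line $W\subset\Omega$ through $o$, pick $\theta$ with $q(\theta)=W\cap L$ and let $\phi_n\to 0$; then $K(\theta,\phi_n)\to K_0$ and, by Lemma~\ref{chinita}, the lines of symmetry $D(\theta,\phi_n)$ through $q(\theta)$ converge to a line of symmetry of $K_0$ through $q(\theta)$; since $K_0$ has centre $o$, that line must be $W$. Hence every diameter of $K_0$ is a line of symmetry and $K_0$ is a disc. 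This uses only the raw hypothesis (a line of symmetry through $\Pi\cap L$) and a limit, whereas you are building on the circular vertical sections $K(\theta,\pi/2)$ already manufactured in Lemma~\ref{cometa}; your argument therefore re-proves something the paper obtains independently and more cheaply. On the other hand, your observation that the circles $K(\theta,\pi/2)$ all pass through $P,Q$ and therefore give $p$ constant power $t_0^2$ with respect to $\partial K_0$ is nice, and your final remark that this already places $\partial K$ on the sphere of centre $o$ is exactly how the paper finishes the proof of Theorem~\ref{sisi} in the lemma following Lemma~\ref{astrud}.

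There is, however, a real gap at your rigidity step. You assert that after conjugating $g$ to $z\mapsto\mu z$, ``invariance forces the image of $\gamma$ to be radial near each of $0$ and $\infty$''. Invariance alone does \emph{not} force this: any arc of the form $\arg w=f(\log|w|)$ with $f$ continuous and $\log\mu$--periodic is invariant under $z\mapsto\mu z$ and runs from $0$ to $\infty$ without being a ray. Convexity of $\gamma$ is essential here and is not preserved by the M\"obius conjugation, so you must explain exactly how it enters. The fix is short: because $\gamma$ is convex, its upper arc has a one-sided tangent direction at the fixed point $r$; since $\psi(z)=(z-r)/(z+r)$ is conformal at $r$, the image arc $\tilde\alpha$ has a tangent direction $\tau_0$ at $0$, meaning $\arg w\to\tau_0$ as $w\to 0$ along $\tilde\alpha$. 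Now for any $w\in\tilde\alpha$, the points $\mu^{-n}w\in\tilde\alpha$ tend to $0$ with $\arg(\mu^{-n}w)=\arg w$, so $\arg w=\tau_0$ for every $w\in\tilde\alpha$; thus $\tilde\alpha$ is the ray $\arg w=\tau_0$, the lower arc is handled the same way, and $\tilde\gamma$ is a line. With this correction your argument goes through, but as written the crucial use of convexity is missing and the stated reason (``invariance forces radiality'') is false.
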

\begin{proof}
In order to prove that $K_0$ is a circle we are going to show that all the lines contained in $\Omega$, passing through the centre $o$ of $K_0$, are lines of symmetry of $K_0$. Let $W\subset \Omega$ be a line passing through the centre $o$ and let $\theta\in[0,\pi]$ such that $q(\theta)=W\cap L$, \textit{i.e.}, $L(\theta)$ meet $L$ at $W\cap L$. Let $\{\phi_n\}\subset [0,\pi]$ be a sequence such that $\phi_n \rightarrow 0$ when $n \rightarrow \infty$. It clear that $K(\theta, \phi_n) \rightarrow K_0$ when $n \rightarrow \infty$. Each section $K(\theta, \phi_n)$ has two lines of symmetry $D(\theta, \phi_n)$ and $E(\theta, \phi_n)$ passing through $q(\theta)$ and $m(\theta)$, respectively. 
By  Lemma \ref{chinita}, $K_0$ has a line of symmetry $T$ passing through $q(\theta)$ and, since $K_0$ has $o$ as a centre, $T$ is also passing through $o$. Thus $T=W$. Hence $K_0$ is a circle. 
\end{proof}

\begin{lemma}\label{sph}
$K$ is a sphere with centre at $o$.
\end{lemma}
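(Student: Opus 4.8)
The plan is to identify $K$ explicitly with the sphere $S$ centred at $o$ that passes through the endpoints of the vertical chord of $K$ through $p$, by checking that $K$ and $S$ have the same section in every vertical plane through $p$. I keep the coordinates already fixed in the proof of Theorem~\ref{sisi}: $p$ is the origin, $\Omega=\{x_3=0\}$, $\Lambda$ is the $x_1$-axis, and $o=(d,0,0)$ with $d\neq 0$. Since $\Lambda$ is an axis of symmetry, $R_\Lambda$ sends $(0,0,z)$ to $(0,0,-z)$, so the chord of $K$ along the $x_3$-axis has $p$ as its midpoint; write its endpoints as $T=(0,0,h)$ and $B=(0,0,-h)$, $h>0$. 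Each plane $\Omega(\theta,\pi/2)$ contains the $x_3$-axis, hence the chord $TB$.

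Next I would parametrize the vertical sections. By Lemma~\ref{caminante} we have $\phi_\theta=\pi/2$, and by Lemma~\ref{cometa} the section $K(\theta,\pi/2)$ is a circle; it passes through $T$ and $B$, and its centre lies on the perpendicular bisector of $TB$ inside $\Omega(\theta,\pi/2)$, which is exactly $L(\theta)$. Writing $u(\theta)=(\cos\theta,\sin\theta,0)$ and $c(\theta)=s(\theta)\,u(\theta)$, the radius is $r(\theta)=\sqrt{s(\theta)^2+h^2}$, and since $L(\theta)$ passes through the centre $c(\theta)$, the chord $L(\theta)\cap K$ is a diameter of this circle with endpoints $c(\theta)\pm r(\theta)\,u(\theta)$.

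To pin down $s(\theta)$ I would use two central sections. At $\theta=0$ the section $K(0,\pi/2)=\{x_2=0\}\cap K$ contains $o$; being a central planar section of a centrally symmetric body it is centrally symmetric about $o$, and being a circle it is centred at $o$ with radius $\sqrt{d^2+h^2}$, so the endpoints of $\Lambda\cap K$ lie at distance $\sqrt{d^2+h^2}$ from $o$. By Lemma~\ref{astrud}, $K_0=\Omega\cap K$ is a circle, necessarily centred at $o$, and since $\Lambda\cap K$ is one of its diameters it has radius $\rho=\sqrt{d^2+h^2}$. The endpoints $(s\pm r)\,u(\theta)$ of $L(\theta)\cap K$ lie on $\partial K_0$, so each satisfies $\big|(s\pm r)u(\theta)-d\,e_1\big|^2=\rho^2$, i.e. $(s\pm r)^2-2(s\pm r)d\cos\theta=h^2$. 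Thus $s+r$ and $s-r$ are the two roots of $t^2-2d\cos\theta\,t-h^2=0$, whose sum is $2d\cos\theta$; hence $s(\theta)=d\cos\theta$ and $r(\theta)=\sqrt{d^2\cos^2\theta+h^2}$.

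Finally I would compare with $S=\{(x_1-d)^2+x_2^2+x_3^2=d^2+h^2\}$. Parametrizing a point of $\Omega(\theta,\pi/2)$ as $\alpha\,u(\theta)+\beta\,e_3$ and substituting gives $(\alpha-d\cos\theta)^2+\beta^2=d^2\cos^2\theta+h^2$, so $S\cap\Omega(\theta,\pi/2)$ is the circle of centre $d\cos\theta\,u(\theta)=c(\theta)$ and radius $r(\theta)$; therefore $K(\theta,\pi/2)=S\cap\Omega(\theta,\pi/2)$ for every $\theta$. Since the vertical planes $\Omega(\theta,\pi/2)$ all contain the $x_3$-axis and sweep out all of $\Rt$ as $\theta$ varies, taking the union over $\theta$ yields $K=S$, so $K$ is a sphere with centre $o$. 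I expect the only real obstacle to be the third step, namely extracting $s(\theta)=d\cos\theta$ from the two conditions that $K(\theta,\pi/2)$ be a circle through $T,B$ centred on $L(\theta)$ and that $L(\theta)\cap K\subset\partial K_0$; once the centres and radii are known, the endpoint identity and the final covering argument are routine.
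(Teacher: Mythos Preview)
Your argument is correct and follows the same strategy as the paper: use Lemmas~\ref{caminante}, \ref{cometa}, and \ref{astrud} to identify each vertical section $K(\theta,\pi/2)$ as an explicit circle and then assemble these circles into a sphere of centre $o$. The only cosmetic difference is that the paper locates the centre of $K(\theta,\pi/2)$ as the midpoint of the chord $L(\theta)\cap K_0$ (hence the foot of the perpendicular from $o$ to $L(\theta)$) and finishes with a one-line Pythagorean computation $\|x-o\|^2=a^2+(\text{radius})^2=\rho^2$, whereas you recover the same centre $d\cos\theta\,u(\theta)$ via Vieta's formulas and then match with the explicit sphere equation; both routes are equivalent.
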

\begin{proof}
By Lemma \ref{astrud}, $K_0$ is a circle. We suppose that the radius of $K_0$ is equal to 1. We are going to prove that $K$ is a sphere of ratio 1 with centre at $o$. Let $x \in \partial K$. Let $\theta\in [0,\pi]$ such that $x\in K(\theta,\phi_{\theta})$ (by Lemmas \ref{caminante} and  \ref{cometa} such $\theta$ exists). Let $r$ be a real number such that $o=(r,0,0)$. By Lemma \ref{cometa}, the section $K(\theta,\phi_{\theta})$ is a circle of radius
\[
\frac{|L(\theta)\cap K|}{2}=\sqrt{1-r^2 \sin^2 \theta}.
\]
Hence 
\[
||x-o||=a^2+\big{(}\sqrt{1-r^2 \sin^2 \theta} \big{)}^2=1,
\]
where $a=r\sin\theta$ is the distance from $o$ to $L(\theta)$. 
\end{proof}

In Theorem \ref{sisi}, the hypothesis asks that the line $L$ does not pass through the center of symmetry $o$ of $K$. The next result considers the case where  $o \in L$. In this case, we obtain the conclusion that $K$ is a body of revolution. The proof is very similar to the proof of Theorem \ref{tania}.


{\bf Proof of Theorem \ref{seven}}

\begin{proof}
In order to prove that $K$ is a body of revolution with axis $L$ we are going to show that every plane containing $L$ is a plane of symmetry of $K$. Let $\Pi$ be a plane, $L\subset \Pi$. Let $\Gamma$ be the plane perpendicular to $\Pi$, $L\subset \Gamma$ and we denote by $\Gamma^1$, $\Gamma^2$ the half-spaces defined by $\Gamma$. First we suppose that $p\notin \Gamma$, say $p\in \Gamma^1$. First, we will show that $\Gamma^2\cap K$ is a symmetric set with respect to $\Pi$. Let $W$ be a line perpendicular to $\Pi$, $p\in W$. Let $x\in \Gamma^2 \cap K $. We denote by $\Delta$ the plane generated by $x$ and $W$. We claim that
\[
(\Delta \cap \Gamma) \cap (L \cap \inte K)\not= \emptyset. 
\]
If $(\Delta \cap \Gamma) \cap (L \cap \inte  K)= \emptyset 
$, it would have that $\Delta \cap K\subset \Gamma^1$ but this would contradict that $x\in \Delta \cap K$ and $x\in \Gamma^2$. Let $\Sigma$ be the plane perpendicular to $L$ passing through $y:=(\Delta \cap \Gamma) \cap (L \cap \inte K)$. By  Lemma \ref{marina}, which we can use since $L$ is an axis of symmetry, the relation
\begin{eqnarray}\label{cafe}
t(y)=\Sigma \cap \partial K
\end{eqnarray}
holds. 
By hypothesis, there exists a line of symmetry $H$ of $\Delta \cap K$ passing through $y$. We claim that 
$H=\Pi\cap \Delta$.
Suppose that $H\not=\Pi\cap\Delta$. We denote by $\phi:\Delta \rightarrow \Delta$ the reflection with respect to the line $H$. Since $H\not=\Pi\cap\Delta$ it follows that, on the one hand, $\phi(M)\not=M$, where $M:=\Delta\cap \Gamma$, and, on the other hand, the line segment $\phi (M)\cap K$ has $y$ as mid-point. 
But, since  $\phi(M)\not=M$, $\phi (M)\cap K$ is not contained in $\Sigma \cap K$, which contradicts \eqref{cafe}.
\end{proof}

{\bf Acknowledgement:}
 This work was partially done during a sabbatical year of the fourth author at University College London (UCL). The fourth author thanks  UCL for their hospitality and support.  We thank Dmitry Ryabogin for many fruitful discussions about these results. We also thank the referee for many valuable suggestions to improve the clarity of the paper.

The first author is supported in part by the Simons Foundation gift  711907. The fourth author is supported by the National Council of Sciences and Technology of Mexico (CONACyT) Grant I0110/62/10 and SNI 21120.

\end{document}